\numberwithin{equation}{section}
\newtheorem{theorem}{Theorem}[section]
\newtheorem{thm}{Theorem}[section]
\newtheorem{definition}[theorem]{Definition}
\newtheorem{proposition}[theorem]{Proposition}
\newtheorem{remark}[theorem]{Remark}
 \DeclareMathAlphabet\mathcal{OMS}{cmsy}{m}{n}
\SetMathAlphabet\mathcal{bold}{OMS}{cmsy}{b}{n}
\newcommand{\mR}{\mathbb{R}}
\newcommand{\mN}{\mathbb{N}}
\newcommand{\um}[1]{$\ddot{\text{#1}}$}
\newcommand{\Om}{\Omega}
\newcommand{\om}{\omega}
\newcommand{\cd}{\mathcal{D}}
\newcommand{\ocd}{\overline{\mathcal{D}}}
\newcommand{\cf}{\mathcal{F}}
\newcommand{\ch}{\mathcal{H}}
\newcommand{\cg}{\mathcal{G}}
\begin{document}

\title[Malliavin Calculus for the one-dimensional stochastic Stefan problem]
{Malliavin Calculus for the one-dimensional stochastic Stefan problem
}

\author[Antonopoulou, Dimitriou, Karali, Tzirakis]{D. C. Antonopoulou$^{\ddag}$, D. Dimitriou$^{*}$, G. Karali$^{\ddag *}$, K. Tzirakis$^{\dag *}$}

\thanks
{$^{\dag}$ Department of Mathematics and Applied Mathematics,
University of Crete, GR--714 09 Heraklion, Crete, Greece.} 
\thanks
{$^{\ddag}$ Department of Mathematics, National and Kapodistrian
University of Athens, Panepistimiopolis, 15784, Athens,
Greece.}
\thanks
{$^{*}$ Institute of Applied and Computational Mathematics,
FORTH, GR--711 10 Heraklion, Greece.}
\thanks{E-mails: danton@math.uoa.gr, dimitriou.dimos@gmail.com,
gkarali@math.uoa.gr, kostas.tzirakis@gmail.com}

%
%
\begin{abstract}
We consider the one-dimensional outer stochastic Stefan problem with reflection. 
The problem admits maximal solutions as long as the velocity of the moving boundary remains bounded, \cite{AFK2,BH,BH2}. We apply Malliavin calculus to the transformed equation and first prove that its maximal solution $u$ has continuous paths a.s. In the case of the unreflected problem, the previous enables the localization of a proper approximating sequence of the maximal solution. Then, we derive there locally the differentiability of maximal $u$ in the Malliavin sense. The novelty of this work, apart from the derivation of continuity of the paths for the maximal solution with reflection, is that for the unreflected case we introduce a localization argument on maximal solutions and define efficiently the relevant sample space. More precisely, we prove the local (in the sample space) existence of the Malliavin derivative and, under a non-degeneracy condition on the noise coefficient, the absolute continuity of the law of the solution with respect to the Lebesgue measure.
\end{abstract}

\keywords{Stochastic Stefan problem; Malliavin Calculus; Absolute continuity of the law of the solution}

\subjclass{35R37; 35R60; 60H07; 60H30; 60H15; 80A22}
\maketitle
\pagestyle{myheadings}
\thispagestyle{plain}
%
%

\section{Introduction}
\subsection{The Stochastic Stefan problem}
We consider two well separated phases $\Omega_{{\rm Sol}}$ and $\Omega_{{\rm Liq}}$, the solid and liquid phase respectively, occupying a static bounded domain $\Omega$ of $\mathbb{R}$ defined
by the interval $\Omega:=(a,b)$. Let $\Omega_{{\rm Sol}}(t):=(s^-(t),s^+(t))$ be the solid phase at time $t$, and
\begin{equation}\label{lam}
\Omega_{{\rm Liq}}(t) = \Omega\setminus [s^-(t), s^+(t)] ,
\end{equation}
so
$$\Omega=\Omega_{{\rm Liq}}(t)\cup \{s^-(t),s^+(t)\}\cup \Omega_{{\rm Sol}}(t) . 
$$
We assume that the initial solid phase is in $\Omega$ and far from its boundary, i.e., that $\lambda\gg s^+(0)-s^-(0)$, for $\lambda:=b-a$ the length of $\Omega$.

The outer parabolic stochastic Stefan problem is defined by
\begin{equation}\label{stef1}\left\{
\begin{aligned}
&\partial_t w=\alpha\Delta w +\sigma\big({\rm dist}(y,\partial
\Omega_{\rm{Sol}})\big) \dot{W}_s(y,t)+\dot{\eta}_s(y,t), \qquad y\in\Omega_{\rm{Liq}}(t),\quad t>0\quad \mbox{(`liquid' phase)},
\\
&w(y,t)=0,\qquad y\in \overline{\Omega_{\rm{Sol}}(t)},\quad t>0 , \qquad \mbox{(`solid' phase)},
\\
&V=-\nabla w|_{\partial\Omega_{\rm{Sol}}(t)},\quad  t>0 , \qquad\qquad\qquad \mbox{(Stefan condition)},
\\
&w(a,t)=w(b,t)=0,\quad t>0 ,
\\
&\partial \Omega_{\rm{Sol}}(0)=\{s^-(0),s^+(0)\}={\rm given}.
\end{aligned}
\right.
\end{equation}
Here, $w=w(y,t)$ is a density, $\alpha> 0$ is a positive constant, and $\sigma$ is a function of the distance of $y$ from the solid phase boundary, $ {\rm dist}(y,\partial\Omega_{\rm{Sol}}(t)):=\min\{|y-s^+(t)|,|y-s^-(t)|\}$. The noise term is given by
\begin{equation*}\label{noi1}
\dot{W}_s(y,t) := \dot{W}(y-s^+(t),t) \;\; \text{if}\;\;y\geq
s^+(t), \;\;
\dot{W}_s(y,t):=\dot{W}(-y+s^-(t),t) \;\; \text{if}\;\;y\leq s^-(t),
\end{equation*}
where $\dot{W}(\pm y\mp s^\pm(t),t)$ is a space-time white noise,
see for example in \cite{Walsh,DQS}. The initial condition $w(y,0)=w_0(y) $ is a sufficiently smooth deterministic function compactly supported in the liquid phase. The moving boundary of \eqref{stef1} is the union for all $t\geq 0$ of the curves $y=s^+(t),\;\; y=s^-(t)$ enclosing the solid phase with midpoint $s(t):=(s^-(t)+s^+(t))/2$ and length $s^+(t)-s^{-}(t)$ defining the spread. The solution $w(y,t)$ vanishes when $y$ is in the closure of the solid phase. The Stefan condition describes the
change of liquidity; $V$ there is the velocity of the interface, and for $(\nabla \cdot )^\pm$ denoting the derivative from the right ($y> s^+$) and left ($y<s^-$) it follows that
$$
V(s^+(t),t):=\partial_t s^+(t)=-(\nabla w)^+(s^+(t),t),\;\;\;\;
V(s^-(t),t):=\partial_t s^-(t)=-(\nabla w)^-(s^-(t),t).$$ 
The term on \eqref{stef1} is given by
$$\dot{\eta}_s(y,t) := \dot{\eta}_1(y-s^+(t),t)\quad \text{if}\quad y\geq s^+(t), \qquad
\dot{\eta}_s(y,t):=\dot{\eta}_2(-y+s^-(t),t) \quad \text{if}\quad y\leq s^-(t),$$
where $\eta_1$, $\eta_2$ are reflections.


\subsection{The transformed problem}
The liquid phase consists of two separate bounded linear segments which enables the splitting of the Stefan problem equation in two equations posed for on $y\geq s^+$ and on $y\leq s^-$. After the
change of variables
\begin{equation*}\label{cv1gen}
x:= y-s^+(t) \;\; \text{if }y\geq s^+(t), \qquad
x:=-y+s^-(t) \;\; \text{if }y\leq s^-(t),
\end{equation*}
cf. \cite{AFK2}, the stochastic equation of \eqref{stef1} is transformed by the use of the Stefan condition into two independent ones each posed on the fixed space domain $\mathcal{D}:=(0,\lambda)$ with Dirichlet boundary conditions. The value $ x=0 $ occurs when the price $y$ is $s^{\pm}$, while $x=\lambda$ when the spread is zero and $s^+=s^-$ hits the boundary of $\Omega$. These equations are of the form
\begin{equation}\label{meq11}
u_t(x,t)=\alpha  u_{xx}(x,t)\mp u_x(0^+,t)
u_x(x,t)\pm\sigma(x)\dot{W}(x,t)+\dot{\eta}(x,t),\;\;x\in\mathcal{D},\;\;t>0,
\end{equation}
with $\eta(dx,dt)$ a random measure on $ D\times \mathbb R_+ $ (reflection) keeping $u$ a.s.\ non-negative, and a space-time white noise $\dot{W}(x,t)$ where $W=W(x,t)$ is a Wiener process. We also note that when a system is considered in place of the Stefan problem \eqref{stef1} with different liquidity coefficients $\alpha_1$, $\alpha_2$ on the diffusion term $\Delta w$, a problem analyzed in \cite{BH} when the spread is zero, the same equation of the above general form \eqref{meq11} will appear after the change of variables for $\alpha=\alpha_1,\alpha_2$.

Without restricting the generality, we consider one of the cases for the s.p.d.e.\ which yields the following transformed initial and boundary value problem
\begin{equation}\label{trstef}
\left\{
\begin{aligned}
&u_t(x,t)= \alpha
u_{xx}(x,t)-u_x(0,t)u_x(x,t)+\sigma(x)\dot{W}(x,t)+\dot{\eta}(x,t),\quad
(x, t)\in\cd\times(0,T),\\
& u(x,0) =
u_0(x),\;\;\forall x\in\overline{\cd},\\
&u(0,t) = u(\lambda,t) = 0,\;\;\forall t\in[0,T].
\end{aligned}
\right.
\end{equation}
As in \cite{AFK2}, we assume that
\begin{equation}\label{sassum}
\sigma:\overline{\cd}\rightarrow\mR\;\;\mbox{ is a function
in}\;\;C(\overline{\cd}),\;\;\mbox{and differentiable
at}\; x=0,\;\; \mbox{ with }\;\sigma(0)=\sigma(\lambda)=0,
\end{equation}
and, since $w_0$ is compactly supported on the initial liquid
phase, that
\begin{equation}\label{u0assum}
u_0\;\;\mbox{is a deterministic and non-negative function in}\;\;
C^\infty_c(\overline{\mathcal{D}}).
\end{equation}
The reflection measure $\eta$ keeps $u$ non-negative and satisfies
\begin{equation}\label{psi}
\begin{split}
&\mbox{for all}\mbox{ measurable functions }
\psi:\;\overline{\mathcal{D}}\times (0,T)\rightarrow [0,\infty)\\
&\int_0^t\int_{\mathcal{D}}\psi(x,s)\eta(dx,ds)\;\;\mbox{is}\;\;\mathcal{F}_t-\mbox{measurable}
\end{split}
\end{equation}
for $\cf_{t}$ the filtration generated by $\{W(x,s): 0 \leq s\leq
t, x\in\cd\}$, and the constraint
\begin{equation}\label{refl}
\int_0^T\int_{\mathcal{D}} u(x,s)\eta(dx,ds)=0.
\end{equation}

The problem \eqref{trstef}, \eqref{psi}, with $\sigma, u_0 $ satisfying \eqref{sassum} and \eqref{u0assum} respectively, has a unique maximal solution $(u,\eta)$ in the time interval $[0,T^*)$, where
\begin{equation}\label{stime}
T^*:=\displaystyle{\sup_{M>0}}\tau_M,
\end{equation}
for
$$\tau_M:=\inf\Big{\{}T\geq
0:\;\displaystyle{\sup_{r\in[0,T)}}|u_x(0^+,r)|\geq M\Big{\}}
\equiv\inf\Big{\{}T\geq 0:\;\displaystyle{\sup_{r\in[0,T)}}
u_x(0^+,r)\geq M\Big{\}} ,
$$
see in \cite{AFK2}. An analogous result for the same stopping time $T^*:=\sup_{M>0}\tau_M$ was derived in \cite{BH} for a system of such SPDEs when $\alpha=\alpha_1,\alpha_2$.

The solution $ u $ of problem \eqref{trstef} is written in integral representation
$\forall x\in \cd,\ t\in[0,T^{*})$ as
\begin{equation}\label{int}
\begin{split}
u(x,t) = & \int_{\mathcal{D}}u_0(y) \,G(x,y,t)\, dy +\int_0^t\int_{\mathcal{D}}u_y(0,s) \,G_{y}(x,y,t-s) \,u(y,s)\, dy\,ds\\
&+\int_0^t\int_{\cd}G(x,y,t-s)\sigma(y) \, W(dy,ds)
+\int_0^t\int_{\cd} G(x,y,t-s) \,\eta(dy,ds),
\end{split}
\end{equation}
where $G$ is the Green's function of the Dirichlet problem on
$\overline{\cd}$ of the heat equation $v_t=\alpha v_{xx}$. 

We recall the Banach space
$(\mathcal{H},\|\cdot\|_{\mathcal{H}})$, introduced in \cite{BH},
where
\begin{equation}\label{bansp}
\ch:=\big{\{}f\in C(\overline{\mathcal{D}}):\; \exists\ f'(0),\;\;
\mbox{and}\;\;\ f(0)=f(\lambda)=0\big{\}},\;\;\;
\text{and}\;\;\;\;\|f\|_{\ch}:=\displaystyle{\sup_{x\in\mathcal{D}}}\bigg{|}\dfrac{f(x)}{x}\bigg{|}.
\end{equation}

Let us consider $M>0$ deterministic and $T>0$ deterministic, and for a positive integer $n$ define the stopping time
\begin{equation}\label{stimen}
\tilde{\tau}_n:=\inf\Big{\{}T\geq
0:\;\displaystyle{\sup_{r\in[0,T)}}\|u(\cdot,r)\|_{\mathcal{H}}=\displaystyle{\sup_{r\in[0,T)}}\, \displaystyle{\sup_{x\in\mathcal{D}}}\, \bigg{|}\dfrac{u(x,r)}{x}\bigg{|}\geq
n\Big{\}},
\end{equation}
it then holds that, cf.
\cite{BH},
\begin{equation}\label{imp1}
E\Big(\sup_{t\in[0,\min\{T,\tau_M,\sup_{n>0}\tilde{\tau}_n\})}\|u(\cdot,t)\|_{\mathcal{H}}^p\Big)\leq
C(T,M,p)<\infty,
\end{equation}
where we used that in our problem
$h(u_y(0,s))=u_y(0,s)$ which has a linear growth as in the
result of \cite{BH}.
\begin{remark}\label{glob}
Note that the bound of the above expectation depends on the choice of $M$. In the case where the term $u_{y}(0,s)$ was replaced by some $h(u_{y}(0,s))$ uniformly bounded for any $M>0$ then the
solution $u$ would be global, cf. Corollary 5.7 of \cite{BH}, and it would hold that
\begin{equation}\label{imp2}
E\Big{(}\displaystyle{\sup_{t\in[0,\min\{T,\sup_{M>0}\tau_M,\sup_{n>0}\tilde{\tau}_n\})}}\|u(\cdot,t)\|_{\mathcal{H}}^p\Big{)}\leq
C(T,p)<\infty.
\end{equation}
\end{remark}

We fix now $T>0$ deterministic and $M>0$ deterministic, and define
\begin{equation}\label{omm}
\Omega_M:=\Big\{\omega\in\Omega:\;\displaystyle{\sup_{t\in[0,\min\{T,\tau_M\})}}
|u_x(0^+,t,\omega)|< M\Big\},
\end{equation}
and for any integer $n>0$, we define
\begin{equation}\label{ommn}
\Omega_M^n:=\Big\{\omega\in\Omega_M:\;\displaystyle{\sup_{t\in[0,\min\{T,\tau_M\})}}
\|u(\cdot,t,\omega)\|_{\mathcal{H}}<n\Big\}.
\end{equation}
It then follows that
$$\Omega_M^1\subseteq\Omega_M^2\subseteq\cdots\subseteq
\Omega_M,$$
and
$ \Omega_M^n\uparrow\Omega_M$ as
$n\rightarrow\infty$.
\begin{remark}\label{nonemp}
As we assume that the initial condition $u_0$ satisfies \eqref{u0assum}, then it follows that for $t=0$ $u(x,0)=u_0(x)$ satisfies the bounds in the definitions of $\Omega_M$, and of $ \Omega_M^n $ for $M$ large enough and for all $n\geq n_0 $ respectively. This yields that the sets $\Omega_M$ and $\Omega_M^n$ are not empty for $M$ large enough such that $\nabla u_0(0)< M$ and for all $n\geq n_0$ for $n_0$ large enough such that $\|u_0(\cdot)\|_{\mathcal{H}}^p<n_0$, where of course $M$, $n_0$ depend on the initial condition $u_0$.
\end{remark}

\subsection{Green's function} Throughout this paper, we denote by $G$ the Green's function for the heat equation $u_t(x,t)= \alpha u_{xx}(x,t)$ on $[0,\lambda] $ with Dirichlet boundary conditions, which is given by
\begin{equation}\label{kostasgreen1}
G(x,y,t-s) = \frac{2}{\lambda} \sum_{k=1}^\infty \sin(\tfrac{k\pi}{\lambda} y)\, \sin(\tfrac{k\pi}{\lambda} x)\, e^{-\alpha (\pi/\lambda)^2 k^2 (t-s)}  ,
\end{equation}
 for $s<t. $ An equivalent expression is
\begin{equation}\label{kostasgreen2}
G(x,y,t-s)=\frac{1}{\sqrt{4\pi\alpha(t-s)}}\sum_{k=-\infty}^\infty\left[\exp\Big(-\frac{(y-x+2k\lambda)^2}{4\alpha(t-s)}\Big)-\exp\Big(-\frac{(x+y+2k\lambda)^2}{4\alpha(t-s)}\Big)\right].
\end{equation}
We will use the following estimates of $ G $ and its derivative $G_y $ (see \cite{EidelIvas}, \cite{Solonnikov}). We have, for some constants $c,C>0, $
$$ |G(x,y,t-s)|\leq C\,|t-s|^{-1/2} \, \exp\Big(\!-c\,\frac{|x-y|^2}{|t-s|}\Big), $$
and so
\begin{equation}\label{ggg}
|G(x,y,t-s)|^p\leq
 C\,|t-s|^{-\frac{p}{2}}\,\exp\Big(\!-c\,\frac{|x-y|^{2}}{|t-s|}\Big).
\end{equation}
We also have
\begin{equation}\label{gen}
|G_y(x,y,t-s)|\leq
c|t-s|^{-1}\exp\Big{(}\!-c\,\frac{|x-y|^2}{|t-s|}\Big{)},
\end{equation}
and so
\begin{equation}\label{gy}
\int_{\mathcal{D}}|G_y(x,y,t-s)|dy\leq
c|t-s|^{-1}\int_{\mathcal{D}}\exp\Big(\!-c\frac{|x-y|^2}{|t-s|}\Big)dy\leq
c|t-s|^{-1/2} ,
\end{equation}
where we used the Gaussian integral
\begin{equation}\label{kostasgaus}
\int_{\mathbb{R}}\exp\Big{(}-c\,\frac{|x-y|^{2}}{|t-s|}\Big{)}dy = \sqrt{\tfrac{\pi}{c}}\; |t-s|^{\frac{1}{2}}.
\end{equation}

\subsection{Main Results}
In Section 2, we consider the problem with reflection and prove that the paths of the sample space $\Omega_M$, where $M>0$ is an arbitrary deterministic constant, are space-time continuous, i.e., for all realizations in $\Omega_M$ where as proven in \cite{BH,AFK2} the solution exists, it is also space-time continuous. Section 3 is devoted to the localization of $u$ in the given sample space $\Omega_M$ when the unreflected problem is considered. We present there some important definitions of the spaces where we investigate the Malliavin differentiability. We then define a proper approximation $u_n$ of $u$ for which $u_n=u$ on $\Omega_M^n$ and prove that $u_n$ exists uniquely as a solution of a cut-off spde. Finally, in Section 4, we consider $M_{\rm d}>0$ deterministic and arbitrary. We prove that the Malliavin derivative of $u$ exists for times up to which the Malliavin derivative of the term in the definition of $\Omega_M$ (i.e., $u_x(0^+,t,\omega)$) not only is bounded by $M$ but also stays differentiable in the Malliavin sense with derivative upper bounded by $M_{\rm d}$. We additionally prove some estimates for the derivative and establish the absolute continuity of the law of the solution with respect to the Lebesgue measure on $\mathbb R.$

\section{Space-Time Continuity for the paths of $\Omega_M$}
In this section we prove the next very useful theorem establishing
the space-time continuity of $u(\cdot,\cdot,\omega)$ for any
$\omega\in\Omega_M$, which is essential for the localization
argument.

\begin{thm}\label{thmcon}
Let $u_0$, $\sigma$ satisfy \eqref{u0assum} and \eqref{sassum}
respectively and $M>0$, $T>0$ deterministic fixed values, and $M$
large enough such that $\nabla u_0(0)<M$. Consider
$\omega\in\Omega_M$ for $\Omega_M$ defined by \eqref{omm}. The
unique maximal solution $u(x,t,\omega)$ of \eqref{int} is
continuous in space-time for any $x\in\overline{\mathcal{D}}$ and
any $t\in[0,\min\{T,\tau_M\})$ for $\tau_M$ the stopping
time defined in \eqref{stime}.
\end{thm}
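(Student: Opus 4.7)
The strategy is to localize on the sets $\Omega_M^n$ and then analyze separately each of the four terms in the integral representation \eqref{int}. Since the estimate \eqref{imp1} forces $\sup_{t<\min\{T,\tau_M\}}\|u(\cdot,t,\om)\|_{\ch}^{p}<\infty$ almost surely on $\Omega_M$, we have the increasing union $\Omega_M=\bigcup_{n\geq n_0}\Omega_M^n$ up to a null set; consequently it is enough to prove the continuity assertion for an arbitrary fixed $\om\in\Omega_M^n$. On such an $\om$ we simultaneously have $|u_x(0^+,t,\om)|<M$ and $|u(x,t,\om)|\leq n^{1/p}x$ for every $t\in[0,\min\{T,\tau_M\})$, and both bounds are the main sources of the estimates that follow.

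Writing $u=I+II+III+IV$ according to \eqref{int}, the deterministic heat-kernel term $I(x,t)=\int_{\cd}u_0(y)G(x,y,t)\,dy$ is the solution of the Dirichlet heat equation with data $u_0\in C^\infty_c(\overline{\cd})$, hence smooth on $\overline{\cd}\times(0,T)$ and continuous up to $t=0$. For the drift term $II$ I would bound the integrand by $|u_x(0,s)|\,|u(y,s)|\,|G_y(x,y,t-s)|\leq M\,n^{1/p}\,y\,|G_y(x,y,t-s)|$; the integrable singularity of the Dirichlet Green's function derivative, combined with a time-splitting $[0,t-\delta]\cup[t-\delta,t]$, dominated convergence on the former and smallness (uniform in $x$) of the latter, yields joint continuity in $(x,t)$.

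For the stochastic convolution $III(x,t)=\int_0^t\int_{\cd}G(x,y,t-s)\sigma(y)\,W(dy,ds)$ the plan is to verify Walsh-type moment estimates
\[
E\bigl[\,|III(x_1,t_1)-III(x_2,t_2)|^p\,\bigr]\leq C\bigl(|x_1-x_2|^{\gamma_1}+|t_1-t_2|^{\gamma_2}\bigr),
\]
by the Burkholder-Davis-Gundy inequality and the standard $L^2$-H\"older bounds on the Dirichlet Green's function, using that $\sigma\in C(\overline{\cd})$ is bounded with $\sigma(0)=\sigma(\lambda)=0$; Kolmogorov's criterion then provides a jointly continuous modification. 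Since this modification depends only on the white noise, its restriction to $\Omega_M^n$ is path-by-path continuous.

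The main obstacle is the reflection contribution $IV(x,t)=\int_0^t\int_{\cd}G(x,y,t-s)\,\eta(dy,ds)$, where $\eta$ is characterized only through \eqref{psi} and the contact condition \eqref{refl} and is not an a priori finite measure. My plan is first to test \eqref{int} against the constant function $\psi\equiv 1$ (or an appropriate barrier) and use the bounds already obtained for $I$, $II$, $III$ together with \eqref{imp1} and the definition of $\Omega_M^n$ to show that $\eta(\cd\times[0,\min\{T,\tau_M\}))$ is a.s.\ finite on $\Omega_M^n$. Continuity of $IV$ in $(x,t)$ then follows from the joint continuity of $G$ on $\overline{\cd}\times\overline{\cd}\times(0,T]$ via a time-splitting argument analogous to the one used for $II$, the contribution near $s=t$ being made arbitrarily small by the finite mass of $\eta$ on short time windows. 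Assembling the four continuity statements and taking the $\om$-wise union over $n$ delivers the theorem.
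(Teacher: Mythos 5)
Your decomposition into the four terms of \eqref{int} and your treatment of the first three agree in substance with the paper: the deterministic heat term is handled by parabolic regularity, the drift term by the pathwise bounds $|u_x(0^+,s)|<M$ and $\sup_x|u(x,s)/x|<\infty$ together with the integrable singularity of $\int_{\cd}|G_y|\,dy\sim (t-s)^{-1/2}$ (the paper makes this quantitative via Proposition A6 of \cite{BH2}), and the stochastic convolution by Kolmogorov's criterion (the paper simply cites Proposition 3.11 of \cite{BH2}, which encapsulates exactly the moment estimates you describe). The localization onto $\Omega_M^n$ is harmless but also unnecessary: the paper works directly with the pathwise finite constant $C(M,p,T,\omega)$ coming from \eqref{imp1}.

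The genuine gap is in your treatment of the reflection term $IV(x,t)=\int_0^t\int_{\cd}G(x,y,t-s)\,\eta(dy,ds)$. Even granting that $\eta$ has a.s.\ finite total mass on $\cd\times[0,\min\{T,\tau_M\})$, your argument that ``the contribution near $s=t$ is made arbitrarily small by the finite mass of $\eta$ on short time windows'' does not go through: the kernel $G(x,y,t-s)$ behaves like $(t-s)^{-1/2}\exp(-c|x-y|^2/(t-s))$ and is unbounded as $s\uparrow t$, so a small mass integrated against an unbounded kernel need not be small. One would need quantitative control on how $\eta$ distributes mass near the diagonal $\{s=t\}$ (and, for that matter, near the spatial boundary, where for reflected SPDEs with Dirichlet conditions one typically only controls $\int \mathrm{dist}(y,\partial\cd)\,\eta(dy,ds)$ rather than the total mass, so even the finiteness you propose to extract by testing against $\psi\equiv 1$ is not immediate from \eqref{psi}--\eqref{refl}). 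The paper avoids this entirely by invoking the structural identification of $IV$ from \cite{AFK2}: it coincides with the solution of a deterministic heat-equation obstacle problem, which by Theorem 3.2 of \cite{BH} is H\"older continuous in space and time. Some such representation (or an equivalent regularity result for the reflection measure) is needed; without it, this step of your proof does not close.
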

\begin{proof}
Since $u(\cdot,t,\omega)\in\mathcal{H}$ for any fixed $t\in[0,\tau_M)$, cf.\ \cite{AFK2}, then by the definition of $\mathcal{H}$, $u(\cdot,t,\omega)\in C(\overline{\mathcal{D}}), $ i.e., $u$ is space-continuous for any fixed $t\in[0,\tau_M)$, where note that $u_0(x)=u(x,0)\in C_c^\infty(\overline{\mathcal{D}})\subset
\mathcal{H}$.\\
Let $t_1, t_2 \in [0,\min\{T,\tau_M\})$ with $t_2\geq t_1$.
In order to ease notation we will use $u(x,t)$ in place of
$u(x,t,\omega)$.
\allowdisplaybreaks
By the integral representation \eqref{int} of $u$, we get
\begin{eqnarray*}
u(x,t_2)-u(x,t_1)
&= &\int_{\cd}u_0(y)\big(G(x,y,t_2)-G(x,y,t_1)\big)\,dy\\
& &+\int_0^{t_2}\int_{\mathcal{D}}u_y(0,s)u(y,s)G_y(x,y,t_2-s)\,dy\,ds\\
& &-\int_0^{t_1}\int_{\mathcal{D}}u_y(0,s)u(y,s)G_y(x,y,t_1-s)\,dy\,ds\\
& & +\int_0^{t_2}\int_{\cd}\sigma(y)G(x,y,t_2-s)W(dy,ds)-\int_0^{t_1}\int_{\cd}\sigma(y)G(x,y,t_1-s)W(dy,ds)\\
& & +\int_0^{t_2}\int_{\cd}G(x,y,t_2-s)\,\eta(dy,ds)-\int_0^{t_1}\int_{\cd}G(x,y,t_1-s)\,\eta(dy,ds)\\
&=&\int_{\cd}u_0(y)\big(G(x,y,t_2)-G(x,y,t_1)\big)dy\\
& & +\int_{t_1}^{t_2}\int_{\mathcal{D}}u_y(0,s)u(y,s)G_y(x,y,t_2-s)\,dy\,ds\\
& & +\int_0^{t_1}\int_{\mathcal{D}}u_y(0,s)u(y,s)\bigl(G_y(x,y,t_2-s)-G_y(x,y,t_1-s)\bigr)\,dy\,ds\\
& & + A(x,t_2) - A(x,t_1),
\end{eqnarray*}
for
$$A(x,t):=\int_0^t\int_{\cd}G(x,y,t-s)\,\sigma(y)\,W(dy,ds)\,+\,\int_0^t\int_{\cd}G(x,y,t-s)\,\eta(dy,ds).$$

\noindent
So, by using \eqref{imp1}, we have 

\begingroup
\allowdisplaybreaks
\begin{eqnarray}
|u(x,t_2)-u(x,t_1)| & \leq & \Big{|}\int_{\cd}u_0(y)\,(G(x,y,t_2)-G(x,y,t_1))\, dy\Big{|}\nonumber\\
& & +\int_{t_1}^{t_2}\int_{\mathcal{D}}u_y(0,s)\,u(y,s)\, \big{|}G_{y}(x,y,t_2-s)\big{|}\,dy\,ds\nonumber\\
& & +\int_0^{t_1}\int_{\mathcal{D}}u_y(0,s)\,u(y,s)\, \big{|}G_y(x,y,t_2-s)-G_y(x,y,t_1-s)\big{|}\,dy\,ds\nonumber\\
& & +\big{|}A(x,t_2)-A(x,t_1)\big{|}\nonumber\\
& \leq & \|u_0\|_{L^{\infty}(\cd)}\int_{\cd} \big{|}G(x,y,t_2)-G(x,y,t_1)\big{|}\,dy\nonumber\\
& & +\int_{t_1}^{t_2}\int_{\mathcal{D}}|u_y(0,s)||y|\Big{|}\frac{u(y,s)}{y}\Big{|}\ \big{|}G_y(x,y,t_2-s)\big{|}\,dy\,ds\nonumber\\
& & +\int_0^{t_1}\int_{\mathcal{D}}|u_y(0,s)||y|\Big{|}\frac{u(y,s)}{y}\Big{|}\ \big{|}G_y(x,y,t_2-s)-G_y(x,y,t_{1}-s)\big{|}\,dy\,ds\nonumber\\
& & +\big{|}A(x,t_2)-A(x,t_1)\big{|}\nonumber\\
& \leq & \|u_0\|_{L^{\infty}(\cd)}\int_{\cd} \big{|}G(x,y,t_2)-G(x,y,t_1)\big{|}\,dy\nonumber\\
& &+C(M,T,\omega)\int_{t_1}^{t_2}\int_{\mathcal{D}}\big{|}G_y(x,y,t_2-s)\big{|}\,dy\,ds\nonumber\\
& &+C(M,T,\omega)\int_0^{t_1}\int_{\mathcal{D}}\big{|}G_{y}(x,y,t_2-s)-G_y(x,y,t_1-s)\big{|}\,dy\,ds\nonumber\\
& &+\big{|}A(x,t_2)-A(x,t_1)\big{|}.\label{keq5}
\end{eqnarray}
\endgroup

\noindent
For the first term in \eqref{keq5}, observe first that the solution  $v(x,t)$ of the parabolic problem

$$v_t=\alpha v_{xx}\;\;\forall(x,t)\in\mathcal{D}\times
(0,T),\;\;v(x,0)=u_0(x)\;\;\forall x\in\mathcal{D},\;\;v(0,t)=v(\lambda,t)=0\;\;\forall t\in(0,T),$$
is given by
$$v(x,t) = \int_{\cd}u_0(y)\,G(x,y,t)\,dy,$$
and so 

$$\Big{|}\int_{\cd}u_0(y) (G(x,y,t_2)-G(x,y,t_1))\,
dy\Big{|}=|v(x,t_2)-v(x,t_1)|\leq
\displaystyle{\sup_{(x,t)\in\mathcal{D}\times
(0,T)}}|v_t(x,t)||t_2-t_1|.$$ Using the equation that $v$
satisfies, we have

$$v_t=\alpha v_{xx},\;\;v_{tt}=\alpha
v_{xxt}=\alpha^2v_{xxxx},\;\;v_{ttt}=\alpha^2v_{xxxxt}=\alpha^3v_{xxxxxx},$$
$$v_{tx}=\alpha v_{xxx},\;\;v_{txx}=\alpha v_{xxxx},\;\;v_{ttx}=\alpha^2v_{xxxxx},$$
and so, we obtain that
$$\displaystyle{\sup_{(x,t)\in\mathcal{D}\times
(0,T)}}|v_t(x,t)|\leq c\|v_t\|_{H^2(\mathcal{D}\times (0,T))}\leq
c\displaystyle{\sup_{t\in (0,T)}}\|v\|_{H^6(\mathcal{D})},$$
where we used that in two dimensions (one for time, one for space) the $L^\infty$ norm is upper bounded by the $H^2$ norm. 
By parabolic regularity, if $u_0\in H^6(\mathcal{D})$, then $v\in L^\infty(0,T;H^6(\mathcal{D}))$.

Of course as we assume a stronger initial value regularity, $u_0\in C_c^{\infty}(\overline{\mathcal{D}})$, we may get directly, see \cite{Evans} Theorem 7 of \S7.1 at pg. 367, that $$\Big{|}\int_{\cd}u_0(y)\big(G(x,y,t_2)-G(x,y,t_1)\big)
dy\Big{|}\leq c|t_2-t_1|,$$ uniformly for all $(x,t)\in\mathcal{D}\times (0,T)$, where $c=c(\omega)$ if $u_0$ was stochastic. In our case we considered $u_0$ deterministic and so $c$ is not depending on the realization.

For the second term in \eqref{keq5}, using \eqref{gy}, we get
\begin{align*}
\int_{t_1}^{t_2}\int_{\mathcal{D}}\big{|}G_y(x,y,t_2-s)\big{|}\, dy\,ds\ \leq C\, |t_2-t_1|^{1/2}\rightarrow 0 ,
\end{align*}
almost surely in $\Omega_M$ as $ t_2\rightarrow t_1$.

For the third term, according to \cite[Proposition A.6]{BH2}, there exists a constant $ C>0$ such that, for
$p>4$,
\begin{align*}
\biggl{(}\int_0^{t_1}\biggl{(}\int_{\mathcal{D}}\big{|}G_y(x,y,t_2-s)-G_y(x,y,t_1-s)\big{|}\ dy\biggr{)}^{\frac{p}{p-2}}ds\biggr{)}^{\frac{p-2}{p}}\leq C\, |t_2-t_1|^{\frac{p-4}{4p}} , \quad \forall x, y \in[0,\lambda] ,
\end{align*}
thus, using H\"older's inequality, we get
$$\int_0^{t_1}\int_{\mathcal{D}}\big{|}G_y(x,y,t_2-s) -G_y(x,y,t_1-s)\big{|}\,dy\,ds\, \leq \, (\lambda t_1)^{\frac{2}{p}}\ |t_2-t_1|^{\frac{p-4}{4p}}\rightarrow 0 , $$
almost surely in $\Omega_M$ as $t_2\rightarrow t_{1}$.

Lastly, regarding the process $A(x,t)$, according to Proposition
3.11 in \cite{BH2}, the first term is almost surely continuous as a function of
$(x,t)$ whereas the second term, according to \cite{AFK2}, coincides with the solution of a specific
Heat equation Obstacle problem which, by Theorem 3.2 of \cite{BH}, admits a H\um{o}lder continuous solution both in space and time. Thus $A(x,t)$ is almost surely continuous in space and time
and so
$$|A(x,t_2)-A(x,t_1)|\rightarrow0\ \text{almost surely in }\Omega_M\;\;\mbox{as}\  t_2\rightarrow t_1.$$
From all the above arguments, we conclude that
$$|u(x,t_2)-u(x,t_1)|\rightarrow 0\ \text{almost surely in }\Omega_M\;\;\mbox{as}\
t_2\rightarrow t_1 , $$ which proves the almost sure in
$\Omega_M$ continuity of $u$ in time.
\end{proof}

\section{Localization of the unreflected $u$ in $\Omega_M$} Let $(\tilde{\Omega},\mathcal{F},P)$ be a probability space and $X:\tilde{\Omega}\rightarrow\mR$ a random variable, and recall that the sample space $\tilde{\Omega}$ consists of  all the possible outcomes $\om\in\tilde{\Omega}$ (simple events) of a random experiment. The Malliavin derivative measures the rate of change of $X$ as a function of $\om\in\tilde{\Omega}$.

\allowdisplaybreaks

\subsection{Basic definitions}
\begin{definition}
The set $L^{2}\bigl{(}\tilde{\Omega}\times\cd\times[0,T]\bigr{)}$ consists of the stochastic processes $v(\om;x,t),\ (x,t)\in\cd\times[0,T]$ such that
$$\|v\|_{L^2(\tilde{\Omega}\times\mathcal{D}\times[0,T])} := \left(E\biggl{(}\int_0^T\int_{\mathcal{D}}|v(x,t)|^2\,dx\,dt\biggr{)}\right)^{1/2}<+\infty.$$
\end{definition}

\vspace{0.05cm}

\begin{definition}
We denote by $D^{1,2}\bigl{(}\cd\times[0,T]\bigr{)}$ the set of stochastic processes $v(\om;x,t),\ (x,t)\in\cd\times[0,T]$ such that the Malliavin derivative $($in space and time$)$, $D_{y,s}v(x,t)$, exists for any $y\in\cd$ and $s\geq0$ and any $(x,t)\in\cd\times[0,T]$, and satisfies $$\|v\|_{D^{1,2}(\cd\times[0,T])}:=\left(\|v\|^2_{L^2(\tilde{\Om})}+\|D_{.,.}v\|^{2}_{L^2(\tilde{\Omega}\times\cd\times[0,T])}\right)^{1/2}<+\infty.$$
\end{definition}

\begin{remark}
Recall that  $D^{1,2}$ is a Hilbert space.
\end{remark}
\vspace{0.05cm}
\begin{definition}
The set $L^{1,2}\bigl{(}\cd\times[0,T]\bigr{)}$ is defined as the class of all stochastic processes $v\in D^{1,2}\bigl{(}\cd\times[0,T]\bigr{)}$ that satisfy
$$\|v\|_{L^{1,2}(\cd\times[0,T])}:=\left(\|v\|^2_{L^2(\tilde{\Omega}\times\cd\times[0,T])}+\|Dv\|^2_{L^2(\tilde{\Omega}
\times(\cd\times[0,T])^{2})}\right)^{1/2}<+\infty \,, $$
where
\begin{align*}
\|Dv\|_{L^2(\tilde{\Omega}\times(\cd\times[0,T])^2)}:=
\left(E\left(\int_0^T\int_{\mathcal{D}}\int_0^T\int_{\cd}\big{|}D_{y,s}v(x,t)\big{|}^2\,dydsdxdt\right)\right)^{1/2}.
    \end{align*}
\end{definition}

\vspace{1cm}

We set $\tilde{\Omega}:=\Omega_M$ defined by \eqref{omm}.
\begin{definition}
The local versions of the sets $D^{1,2}\bigl{(}\cd\times[0,T]\bigr{)}$ and $L^{1,2}\bigl{(}\cd\times[0,T]\bigr{)}$ are defined as follows
$$L^{1,2}_{\text{loc}}(\cd\times[0,T]):=\{ \mbox{stoc. proc. } v:\; \exists\ \{(\Om_M^n,v_n)\}_{n\in\mN}\subset\mathcal{F}\times L^{1,2}
\ \text{such that}\ \Om_M^{n}\uparrow \Om_M\ \text{a.s.}\ \&\ v=v_{n}\ \mbox{a.s.\ on}\; \Om_M^n\},$$
$$D^{1,2}_{\text{loc}}(\cd\times[0,T]):=\{ \mbox{stoc. proc. } v:\; \exists\ \{(\Om_M^n,v_n)\}_{n\in\mN}\subset\mathcal{F}
\times D^{1,2}\ \text{such that}\ \Om_M^n\uparrow \Om_M\ \text{a.s.}\ \&\ v=v_n\ \mbox{a.s.\ on}\;
 \Om_M^n\},$$
where recall that

$$\Om_M^n\uparrow \Om_M\ \text{a.s.}\iff\Om_M^{1}\subseteq\Om_M^2\subseteq\dots\subseteq\Om_M
\quad \text{such that}\quad
\lim_{n\rightarrow+\infty} P(\Om_M^n) = P(\Om_M) = 1.$$
\end{definition}

\vspace{0.5cm}

Recall now that it holds that $L^{1,2}\subseteq
L^{1,2}_{\text{loc}}\subseteq D^{1,2}_{\text{loc}}$\ \text{as
well as}\ $L^{1,2}\subseteq D^{1,2}\subseteq
D^{1,2}_{\text{loc}}\subseteq D^{1,1}_{\text{loc}}$. Also recall
that if $v\in D^{1,2}_{\text{loc}}$ and $(\Om_M^n,v_n)$
localizes $v$ in $D^{1,2}$, then the Malliavin derivative $D_{y,s}v$
is defined without ambiguity by $D_{y,s}v=D_{y,s}v_n$ on $\Om_M^n,\ \forall n\in\mN$.

Since $L^{1,2}\subseteq D^{1,2}$, a constructed localization $(\Om_M^{n},v_{n})$ of $v$ in $L^{1,2}$ is also a localization in $D^{1,2}$ and thus, will define well the Malliavin derivative of $v$ through the Malliavin derivative of $v_n$.

\vspace{0.5cm}

Let us consider the unreflected problem, i.e., we set $\eta=0$ in
\eqref{int}. Here, we note that the bound
\eqref{imp1}, holds
also true, i.e.,
\begin{equation*}
E\Big{(}\displaystyle{\sup_{t\in[0,\min\{T,\tau_M,\sup_{n>0}\tilde{\tau}_n\})}}\|u(\cdot,t)\|_{\mathcal{H}}^p\Big{)}\leq
C(T,M,p)<\infty,
\end{equation*}
but we need to define for $\tau_M:=\inf\big{\{}T\geq
0: \sup_{r\in[0,T)}|u_x(0^+,r)|\geq M\big{\}}$
(which is here $\neq\inf\{T\geq
0:\;{\sup\limits_{r\in[0,T)}} u_x(0^+,r)\geq M\}$),
$\Omega_M$ as follows
\begin{equation}\label{ommnew}
\Omega_M:=\Big\{\omega\in\Omega:\;\displaystyle{\sup_{t\in[0,\min\{T,\tau_M\})}}
|u_x(0^+,t,\omega)|< M\Big\}.
\end{equation}
The absolute value over $u_x$ is introduced in the definition as done in places in \cite{AFK2} wherein the unreflected problem was also analyzed (since $u_x(0^+,t,\omega)$ may be negative in the unreflected case where $u$ changes sign in general).

\subsection{The cut-off solution $\mathbf{u_n=u}$ a.s.\ on $ \mathbf{\Omega_M^n}$}
For fixed $n\in\mN$, we define as in \cite{web,JDE18} the cut-off function
$H_n:\mathbb R\rightarrow\mathbb{R}^+$, $H_n$ smooth (in $ C^1(\mathbb R)$) such that for all $n,\; 0 \leq H_n\leq 1$ and $ |H'_n|\leq 2$ with
$$H_n(v)=
\begin{cases}
1\,, & \text{if }|v|<n, \\
0\,, & \text{if }|v|>n+1.
\end{cases}$$
We now define $T_n(v):=H_n(v)v$.

%

Let $T>0$ deterministic and $M>0$ deterministic. We recall the
definition of the stopping time $\tau_M$ (given in \eqref{stime}),
$\tau_M:=\inf\Big{\{}T\geq 0:\;\displaystyle{\sup_{r\in[0,T)}}
|u_x(0^+,r)|\geq M\Big{\}}$.

We define $\forall x\in\cd,\ t\in[0,\min\{T,\tau_M\})$, the cut-off stochastic integral equation
\begin{equation}\label{coint}
\begin{split}
u_n(x,t,\omega) =& \int_{\mathcal D} u_0(y) \, G(x,y,t) \, dy + \int_0^t\int_{\mathcal D} u_y(0,s) \, G_y(x,y,t-s) \,y\, T_n \bigl(y^{-1}u_n(y,s,\omega)\bigr)\, dy\,ds
\\[0.2cm]
&+\int_0^t\int_{\cd}G(x,y,t-s) \, \sigma(y) \, W(dy,ds),
\end{split}
\end{equation}
for $\omega\in\Omega_M$. 

Subtracting \eqref{int} and \eqref{coint}, then using \eqref{gy} and applying H\"older's inequality with exponents $p>2$ and $ p'=p/(p-1)$, we obtain, for $ \omega\in\Omega_M^n, $
\begin{eqnarray*}
\left|u_n(x,t)-u(x,t)\right|&\leq&
\int_0^t\int_{\mathcal D} |u_y(0,s)| \, |G_y(x,y,t-s)| \, \left|y\, T_n \bigl(y^{-1}u_n(y,s)\bigr)-u(y,s)\right| dy\,ds
\\
&\leq&
M  \int_0^t\int_{\mathcal D} |G_y(x,y,t-s)| \, \left|y\, T_n \bigl(y^{-1}u_n(y,s)\bigr)-u(y,s)\right| dy\,ds
\\
&\leq&
C_n\int_0^t \left(\int_{\mathcal D} |G_y(x,y,t-s)| \, dy\right)\, \left\|u_n(\cdot,s)-u(\cdot,s)\right\|_{L^\infty(\mathcal D)}\,ds
\\
&\leq&
c_n \int_0^t |t-s|^{-1/2} \left\|u_n(\cdot,s)-u(\cdot,s)\right\|_{L^\infty(\mathcal D)}\,ds
\\
&\leq&
c_n 
\left(\int_0^t |t-s|^{-p'/2}\,ds\right)^{1/p'}
\left(\int_0^t \left\|u_n(\cdot,s)-u(\cdot,s)\right\|^p_{L^\infty(\mathcal D)}\,ds\right)^{1/p}
\\
&\leq&
c_{n,p}\,T^{(p-2)/(2p)}\left(\int_0^t \left\|u_n(\cdot,s)-u(\cdot,s)\right\|^p_{L^\infty(\mathcal D)}\,ds\right)^{1/p}\,,
\end{eqnarray*}
therefore
$$\left\|u_n(\cdot,t)-u(\cdot,t)\right\|_{L^\infty(\mathcal D)}^p
\leq 
c^p_{n,p}\,T^{(p-2)/2} \int_0^t \left\|u_n(\cdot,s)-u(\cdot,s)\right\|^p_{L^\infty(\mathcal D)}\,ds \,, $$
hence, by Gr\"onwall's lemma, we obtain that $\left\|u_n(\cdot,t)-u(\cdot,t)\right\|_{L^\infty(\mathcal D)}=0,\; \forall t \in [0,\min\{T,\tau_M\}). $ It follows that $u_n(x,t,\omega)=u(x,t,\omega)$ on $\Omega_M^n$ a.s.\ if the problem \eqref{coint} is well posed.

Next we prove this section's main theorem which is the well-posedness of the cut-off problem \eqref{coint}.

\begin{theorem}\label{thmun}
Equation \eqref{coint} has a unique solution $u_n \in L^p(\Om_M;C([0,\min\{T,\tau_M\});\ch))$ for
$ p\geq 2. $ In particular, 
\begin{equation}\label{bound}
\normalfont \underset{t\in[0,\min\{T,\tau_M\})}{\text{sup}}E\left(\|u_n(\cdot,t)\|_{L^{\infty}(\cd)}^p\right) <+\infty.
\end{equation}
\end{theorem}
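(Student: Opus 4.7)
The plan is to apply Banach's fixed point theorem to the operator $\Gamma$ defined by the right-hand side of \eqref{coint}, acting on the Banach space $X := L^p(\Omega_M;C([0,T^*);\mathcal{H}))$ with $T^*:=\min\{T,\tau_M\}$. The cut-off $T_n$ is designed precisely so that the map $\phi(y,v) := y\,T_n(y^{-1}v)$ is uniformly bounded and Lipschitz: since $H_n$ is supported on $\{|z|\leq n^{1/p}+1\}$ and $|H_n'|\leq 2$, one verifies
\begin{equation*}
|\phi(y,v)| \leq n^{1/p}+1,\qquad |\phi(y,v_1)-\phi(y,v_2)| \leq L_n\,|v_1-v_2|,
\end{equation*}
uniformly in $y\in\mathcal{D}$ and $v,v_1,v_2\in\mathbb{R}$, with $L_n$ depending only on $n$. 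Moreover, by definition \eqref{ommnew} of $\Omega_M$, the coefficient $|u_y(0,s)|$ is bounded a.s.~by $M$ on $[0,T^*)$, so the whole drift becomes a globally Lipschitz functional of $v$ with constant depending only on $n$, $M$, and the Green's function.

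For the contraction step I would invoke the Proposition A6 bound of \cite{BH2} already exploited in the proof of Theorem \ref{thmcon},
\begin{equation*}
\Bigl(\int_0^t\Bigl(\int_{\mathcal{D}}|G_y(x,y,t-s)|\,dy\Bigr)^{p/(p-2)}ds\Bigr)^{(p-2)/p} \leq C\,t^{(p-4)/(4p)},\qquad p>4,
\end{equation*}
combined with H\"older's inequality in $s$ and the Lipschitz property of $\phi$, to obtain $\|\Gamma v_1-\Gamma v_2\|_X \leq C_{n,M,p}\,t^{(p-4)/(4p)}\,\|v_1-v_2\|_X$. This yields strict contraction on $[0,\delta]$ with $\delta=\delta(n,M,p)$, and since $\delta$ does not depend on the starting time, iteration along a uniform partition of $[0,T^*)$ extends to a unique fixed point on the full interval. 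Pathwise continuity in $\mathcal{H}$ of that fixed point follows by repeating the argument of Theorem \ref{thmcon} verbatim, with the reflection-driven contribution (the only non-smooth ingredient there) absent here. The remaining range $2\leq p\leq 4$ then follows from the case $p>4$ by H\"older's inequality.

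The principal technical obstacle is that the $\mathcal{H}$-norm involves division by $x$, requiring uniform control of $(\Gamma v)(x,t)/x$ down to $x=0$. This is handled by the observation that the Dirichlet Green's function vanishes at the endpoints of $\mathcal{D}$, so both $G(x,y,t)/x$ and $G_y(x,y,t)/x$ extend continuously to $x=0^+$; the $\mathcal{H}$-valued estimates on the integral operators already established in \cite{BH,AFK2} for the original equation then transfer verbatim to the cut-off operator $\Gamma$. The bound \eqref{bound} is finally obtained from the fixed-point identity \eqref{coint} together with the embedding $\|f\|_{L^\infty(\mathcal{D})} \leq \lambda\,\|f\|_{\mathcal{H}}$: one estimates $\|\int_{\mathcal{D}} u_0(y)G(\cdot,y,t)\,dy\|_{L^\infty} \leq \|u_0\|_{L^\infty}$ by the heat maximum principle, the drift contribution via $|u_y(0,s)\,\phi(y,u_n(y,s))| \leq M(n^{1/p}+1)$ together with the finiteness of $\int_0^T\int_{\mathcal{D}}|G_y(x,y,T-s)|\,dy\,ds$ uniformly in $x$, and the stochastic convolution via the Burkholder-Davis-Gundy inequality together with $\sigma\in C(\overline{\mathcal{D}})$; taking suprema in $t$ and expectations yields the constant $c=c(n,p,M,T)$ claimed in \eqref{bound}.
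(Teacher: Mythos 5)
Your proposal is correct and follows essentially the same route as the paper: a contraction-mapping argument in $L^p(\Omega_M;C([0,\min\{T,\tau_M\});\mathcal{H}))$ exploiting the uniform Lipschitz property of the cut-off $T_n$, the bound $|u_y(0,s)|\leq M$ on $\Omega_M$, and the Green's-kernel estimates from \cite{BH,BH2} (the paper runs a Picard iteration with $1/k!$ decay rather than contracting on short deterministic intervals and patching, but these are interchangeable). The only divergence is minor: the paper obtains \eqref{bound} by summing the telescoping Picard series, whereas you derive it directly from the uniform bound on the cut-off drift, which should read $|y\,T_n(y^{-1}v)|\leq \lambda(n^{1/p}+1)$ rather than $n^{1/p}+1$.
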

\begin{proof}We construct a Cauchy sequence, through a Picard iteration scheme, which converges, at a certain norm, to the solution $u_n$ of (\ref{coint}). For given $n\in\mN$, we set, $ \forall (x,t) \in \ocd\times [0,\min\{T,\tau_M\}),$
$$u_{n,0}(x,t):= \int_{\mathcal{D}}u_0(y)G(x,y,t)\,dy\ \ \text{for}\ t>0\quad \text{and}\quad u_{n,0}(x,0):=u_0(x) ,$$
and we define iteratively, for any integer $ k\geq 1,$
\begin{equation}\label{cointk}
\begin{split}
u_{n,k+1}(x,t) :=& \int_{\mathcal{D}}u_0(y)G(x,y,t)\,dy + \int_0^t\int_{\mathcal{D}}u_y(0,s)G_y(x,y,t-s)yT_{n}\bigl{(}y^{-1}u_{n,k}(y,s)\bigr{)}\,dy\,ds
\\[0.2cm]
&+\int_0^t\int_{\cd}G(x,y,t-s)\sigma(y)W(dy,ds).
\end{split}
\end{equation}

\noindent
Therefore, for $k\geq1$,
$$u_{n,k+1}(x,t)-u_{n,k}(x,t)=\int_0^t\int_{\mathcal{D}}u_{y}(0,s)G_y(x,y,t-s)y\left(T_n\bigl{(}y^{-1}u_{n,k}(y,s)\bigr{)}-T_n\bigl{(}y^{-1}u_{n,k-1}(y,s)\bigr{)}\right) dy\,ds ,$$
and applying the $\|\cdot\|_{\ch}$-norm at both sides,
then taking $p$-powers,
next taking the supremum on $ t\in[0,\min\{T,\tau_M\}), $ and then taking expectation, we obtain for any $k\in\mN$,
\begin{flalign}\label{bo1}
&E\biggl(\underset{t\in[0,\min\{T,\tau_M\})}{\text{sup}}\bigg[\|u_{n,k+1}(\cdot,t)-u_{n,k}(\cdot,t)\|_{\ch}^p\bigg]\biggr)=
\\
&E\biggl(\sup_{t\in[0,\min\{T,\tau_M\})} \bigg[\
\bigg\|\int_{0}^{t}\int_{\mathcal{D}}u_{y}(0,s)G_y(\cdot,y,t-s)y
\left(T_n\bigl(y^{-1}u_{n,k}(y,s)\bigr)-T_n\bigl(y^{-1}u_{n,k-1}(y,s)\bigr)\right) dy\,ds\bigg\|_{\ch}^p\
\bigg]\biggr).\nonumber
\end{flalign}

Since 
$T_n(\tilde{v})$ is uniformly Lipschitz on $\tilde{v}$, 
we have that
$\left|T_n(y^{-1}v)-T_n(y^{-1}w)\right|\leq c y^{-1}|v-w|$, for any $y>0$, and taking the supremum on $y\in\mathcal D, $ we get
$$\left\|T_n(y^{-1}v)-T_n(y^{-1}w)\right\|_{L^\infty(\mathcal D)}\leq
c\|v-w\|_{\mathcal{H}}.$$
In the right-hand side of \eqref{bo1}, we use the inequality 5 in \cite[Proposition 4.2]{BH} and the inequality $|u_y(0,s)|\leq M, $ then we apply H\"older's inequality and the above inequality, to obtain, for $ p>2,$

\begin{flalign*}
&E \biggl(\underset{t\in[0,\min\{T,\tau_M\})}{\text{sup}}\bigg[\left\|u_{n,k+1}(\cdot,t)-u_{n,k}(\cdot,t)\right\|_{\ch}^p \bigg]\biggr) \leq
\\[0.5em]
&
\leq M^p E\Bigg(\sup_{t\in[0,\min\{T,\tau_M\})} \Bigg[
\Bigg(\int_0^t \left(\sup_{x\in\mathcal D}\int_{\mathcal{D}} \big|G_y(x,y,t-s)\big|\,\frac{y}{x}\,dy\right)\;\times
\\[0.5em]
&
\phantom{\leq M^p E\biggl(\sup_{t\in[0,\min\{T,\tau_M\})} \bigg[}
\times \left\|T_n\bigl(y^{-1}u_{n,k}(y,s)\bigr)-T_n\bigl(y^{-1}u_{n,k-1}(y,s)\bigr)\right\|_{L^\infty(\mathcal D)}\,ds\Bigg)^p\Bigg]\Bigg)
\\[0.5em]
&
\leq c(T,M,p) \sup_{t\in[0,\min\{T,\tau_M\})}\Bigg[
\left(\int_0^t \left((t-s)^{-1/2}\right)^{\frac{p}{p-1}}ds\right)^{p-1}\,\times
\\[0.5em]
& \phantom{\leq c(T,M,p) \sup_{t\in[0,\min\{T,\tau_M\})}\Big(}\times E\left(\int_0^t
\left\|T_n\bigl(y^{-1}u_{n,k}(y,s)\bigr)-T_n\bigl(y^{-1}u_{n,k-1}(y,s)\bigr)\right\|^p_{L^\infty(\mathcal D)}\,ds\right)\Bigg]
\\[0.5em]
& 
\leq c(T,M,p)\int_0^{\min\{T,\tau_M\}}\hspace*{-0.1cm}E\biggl(\underset{t\in[0,s)}{\text{sup}}\bigg[\big\|u_{n,k}(\cdot,t)-u_{n,k-1}(\cdot,t)\big\|_{\ch}^p\,
\bigg]\biggr)\, ds.
\end{flalign*}
Applying recursively the above inequality for the integrand in the right-hand side, we get
\begin{align*}
\|&u_{n,k+1}(\cdot,t)-u_{n,k}(\cdot,t)\|^p_{L^p(\Om_M;C([0,\min\{T,\tau_M\});\ch))}
\\
&=E\biggl{(}\underset{t\in[0,\min\{T,\tau_M\})}{\text{sup}}\bigg{[}\|u_{n,k+1}(\cdot,t)-u_{n,k}(\cdot,t)\|_{\ch}^{p} \bigg{]}\biggr{)}
\\
&\leq c(T,M,p)^k\int_0^{\min\{T,\tau_M\}}\int_0^{s_k}\int_{0}^{s_{k-1}}\dots\int_0^{s_2}
E\biggl{(}\underset{t\in[0,s_1)}{\text{sup}}\bigg{[}\big{\|}u_{n,1}(\cdot,t)-u_{n,0}(\cdot,t)\big{\|}_{\ch}^p\ \bigg{]}\biggr{)}\ ds_1\dots ds_k
\\
&\leq \|u_{n,1}-u_{n,0}\|^p_{L^p(\Om_M;C([0,\min\{T,\tau_M\});\ch))}\
c(T,M,p)^k\ \dfrac{(\min\{T,\tau_M\})^{k}}{k!}\rightarrow0,\qquad \text{as}\,\;k\to+\infty.
\end{align*}
Therefore, $\{u_{n,k}\}_{k\in\mN}$ is a Cauchy sequence in $L^p\bigl{(}\Om_M;C([0,\min\{T,\tau_M\});\ch)\bigr{)}$ and so, by the completeness of the Banach space $\ch$ in this norm, it converges in $L^p\bigl{(}\Om_M;C([0,\min\{T,\tau_M\});\ch)\bigr{)}$ to some unique $u_n\in L^p\bigl{(}\Om_M;C([0,\min\{T,\tau_M\});\ch)\bigr{)}$ as $k\rightarrow+\infty$.

Since $u_{n,k}\rightarrow u_n$ in the norm $L^p\bigl{(}\Om_M;C([0,\min\{T,\tau_M\});\ch)\bigr{)}$ as
$k\rightarrow+\infty$ and since the mapping $T_n$ is Lipschitz (and therefore uniformly continuous), by a standard argument, where we take limits in the $L^{p}\bigl{(}\Om_M;C([0,\min\{T,\tau_M\});\ch)\bigr{)}$ norm in equation \eqref{cointk}, we conclude that $u_{n}$ satisfies equation
\eqref{coint}.

Next, we show the uniqueness of solution of \eqref{coint}. If we suppose that there exists another solution $ w_n $ of \eqref{coint}, then by subtracting their respective equations we get
$$u_n(x,t)-w_n(x,t)=\int_0^t\int_{\mathcal{D}}u_y(0,s) G_y(x,y,t-s) y \left(T_n\bigl{(}y^{-1}u_n(y,s)\bigr{)}-T_n\bigl{(}y^{-1}w_n(y,s)\bigr)\right) dy\,ds ,$$
and, by following a similar process as above, we obtain
$$E\Big(\sup_{t\in[0,\min\{T,\tau_M\})} \Big[\|u_n(\cdot,t)-w_n(\cdot,t)\|_{\ch}^p\Big]\Big)
\leq c(T,M,p) \int_0^{\min\{T,\tau_M\}}\hspace*{-0.1cm}E\Big(\sup_{t\in[0,s)}\Big[\big{\|}u_n(\cdot,t)-w_n(\cdot,t)\big{\|}_{\ch}^p\Big]\Big)\, ds.$$ Hence, by applying Gr\"onwall's Lemma to
the previous inequality, we get
$$E\Big(\sup_{t\in[0,\min\{T,\tau_M\})} \Big[\|u_n(\cdot,t)-w_n(\cdot,t)\|_{\ch}^p\Big]\Big)
\leq 0\ \ \ \text{and so }\ \
E\Big(\sup_{t\in[0,\min\{T,\tau_M\})} \Big[\|u_n(\cdot,t)-w_n(\cdot,t)\|_{\ch}^p
\Big]\Big) = 0.$$ This yields $u_n(x,t)=w_n(x,t)$
almost surely in $\Om_M$ and in $\Om_M^{n}$ for any
$(x,t)\in\ocd\times[0,\min\{T,\tau_M\})$, i.e.,
$$P\left(\big{\{}\om\in \Om_M\ (\text{or}\ \Om_M^n):\; u_n(x,t;\om)=w_n(x,t;\om)\big{\}}\right)=1,\quad \forall (x,t)\in\ocd\times[0,\min\{T,\tau_M\}),$$
and so by definition $u_n,\ w_n$ are equivalent in $\Om_M$
and in $\Om_M^n$.

By Theorem \ref{thmcon}, we know that $u$ has almost surely in
$\Omega_M$ continuous trajectories in $\ocd\times[0,\min\{T,\tau_M\})$ and the approximations $u_n,\ w_n$ satisfy equation \eqref{int} almost surely in $\Om_M^{n}$.
So, $u_n,\ w_n$ are also almost surely continuous in
$\Om_M^n$ and thus indistinguishable in $\Om_M^n$, i.e.,
$$P\left(\big{\{}\om\in \Om_M^n:\; u_n(x,t;\om)=w_n(x,t;\om),\ \forall (x,t)\in\ocd\times[0,\min\{T,\tau_M\})\big{\}}\right)=1,$$
which proves the uniqueness of solution of equation \eqref{int} with uniquely defined paths almost surely on $\Om_M^n$.

We finally prove \eqref{bound}. Since $u_{n,k}\rightarrow u_n$ in $L^p\bigl{(}\Om_M;C([0,\min\{T,\tau_M\});\ch)\bigr{)}$ as $k\rightarrow+\infty$, there exist positive constants $ c_1=c_1(n,p,M,T) $ and $ c_2=c_2(n,p,M,T) $ such that, for any $k\in\mathbb N,$
$$\|u_{n,k}-u_n\|^p_{L^p(\Om_M;C([0,\min\{T,\tau_M\});\ch))}\leq c_1 ,
\qquad \text{and} \qquad \|u_{n,k}\|^p_{L^p(\Om_M;C([0,\min\{T,\tau_M\});\ch))}\leq c_2.$$
Noticing that $ \|f\|_{L^{\infty}(\cd)} \leq \lambda \|f\|_{\ch} $ for any $f\in\ch, $ we get that, for any $t\in[0,\min\{T,\tau_M\}),$
\begin{eqnarray*}
E\big(\|u_{n,k}(\cdot,t)-u_n (\cdot,t)\|^p_{L^{\infty}(\cd)}\big)
&\leq& E\big(\underset{t\in[0,\min\{T,\tau_M\})}
{\text{sup}}\|u_{n,k}(\cdot,t)-u_n (\cdot,t)\|^p_{L^{\infty}(\cd)}\big)
\\
&\leq &\lambda^p \|u_{n,k}-u_n\|^p_{L^p(\Om_M;C([0,\min\{T,\tau_M\});\ch))}
\\[0.2cm]
&\leq &\lambda^p c_1,
\end{eqnarray*}
and 
\begin{eqnarray}
E\big(\|u_{n,k}(\cdot,t)\|^p_{L^{\infty}(\cd)}\big)
&\leq&
 E\big(\underset{t\in[0,\min\{T,\tau_M\})}
{\text{sup}}\|u_{n,k}(\cdot,t)\|^p_{L^{\infty}(\cd)}\big)
\nonumber
\\
&\leq& \lambda^p \|u_{n,k}\|^p_{L^p(\Om_M;C([0,\min\{T,\tau_M\});\ch))} 
\nonumber
\\
&\leq& \lambda^p c_2.\label{kwstas2}
\end{eqnarray}
Therefore, we obtain that
\begin{eqnarray*}
E\big(\|u_n(\cdot,t)\|^p_{L^{\infty}(\cd)}\big) 
&\leq& 2^{p-1} E\big(\|u_n(\cdot,t)-u_{n,k}(\cdot,t)\|^p_{L^{\infty}(\cd)}\big)
+
2^{p-1} E\big(\|u_{n,k}(\cdot,t)\|^p_{L^{\infty}(\cd)}\big)
\\[0.2cm]
&\leq &  2^{p-1} \lambda^p (c_1+c_2) ,
\end{eqnarray*}
for any $t\in[0,\min\{T,\tau_M\}), $ and hence, for $p>2,$
$$\sup_{t\in[0,\min\{T,\tau_M\})}E\big(\|u_n(\cdot,t)\|^p_{L^{\infty}(\cd)}\big)<c(n,p,M,T,\lambda) <+\infty.$$
For $ p=2, $ \eqref{bound} follows by H\"older inequality.
\end{proof}

\section{The Malliavin derivative of the solution $u$ for the unreflected problem}
We will prove that the Malliavin derivative of solution $u_n $ of equation \eqref{coint} for $\eta:=0$, is well defined as the solution of an s.p.d.e. Moreover, we shall prove its regularity in
$D^{1,2}$ and in $L^{1,2}$ by the next proposition.

In what follows, the notation $D_{y,s}f(x,t)$ for a general
function $f$ is used to denote $D_{y,s}(f(x,t))$ which is a
function of the variables $y,s,x,t$. Additionally due to the
independence of the variables $y,s$ from $x,t$ we may commute the
limit as $x\rightarrow 0$ with $D_{y,s}$ if the terms are well
defined, i.e., $D_{y,s}(\displaystyle{\lim_{x\rightarrow
0}}f(x,t))=\displaystyle{\lim_{x\rightarrow 0}}D_{y,s}(f(x,t))$.
So, as $x$ is not depending on the realization obviously, and as
the Malliavin derivative is linear, we have, when the terms are
well defined, that
$D_{y,s}(u_x(0^+,t))=D_{y,s}(\displaystyle{\lim_{x\rightarrow
0^+}}x^{-1}u(x,t))=\displaystyle{\lim_{x\rightarrow
0^+}}x^{-1}D_{y,s}(u(x,t))=:\nabla_x(D_{y,s}u)(0^+,t)$.

\begin{proposition}\label{malun}
Let $u_{n}$ be the unique solution of equation \eqref{coint}. Let
$M_{\rm d}>0$ a fixed deterministic value. We define the stopping
time $\tau_{M_{\rm d }}:=\inf\{T>
0:\;\displaystyle{\sup_{r\in[0,T)}}|D_{y,s}(u_x(0^+,r))|\geq
M_{\rm d}\}$ (note that at $r=0$ $u_x(0^+,0)=(u_0)_x(0^+)$ is
deterministic, and so its Malliavin derivative is zero and so
less than $M_{\rm d}$). Also let $T>0$ be a deterministic time
value.

It holds that:
\begin{itemize}
\item[(i)]$u_n(x,t)\in D^{1,2}\bigl{(}\cd\times[0,\min\{T,\tau_M,\tau_{M_{\rm d}}\})\bigr{)}$.

\vspace{1em}

\item[(ii)]The Malliavin derivative of $u_{n}$ satisfies for all
$(x,t)\in\cd\times[0,\min\{T,\tau_M,\tau_{M_{\rm d}}\})$ and for
any $s\leq t$, uniquely the stochastic equation 
\begin{equation}\label{41}
\begin{split}
D_{y,s}u_n(x,t)=\;&G(x,y,t-s)\sigma(y)+\int_s^t\int_{\cd}G_{\Tilde{y}}(x,\Tilde{y},t-\Tilde{s})u_{\Tilde{y}}(0,\Tilde{s})
 \ \cg_{n}(\Tilde{y},\Tilde{s})D_{y,s}u_n(\Tilde{y},\Tilde{s})\,d\Tilde{y}\,d\Tilde{s}
\\
&+\int_s^t\int_{\cd}G_{\Tilde{y}}(x,\Tilde{y},t-\Tilde{s})
D_{y,s}(u_{\Tilde{y}}(0,\Tilde{s}))\Tilde{y}T_n(\Tilde{y}^{-1}u_n(\Tilde{y},\Tilde{s}))\,d\Tilde{y}\,d\Tilde{s},
\end{split}
\end{equation}
and also for any $s>t$ it holds that $D_{y,s}u_n(x,t)=0$. In
the above, $\cg_{n}$ is a stochastic process which satisfies, for
$c_L>0$ a deterministic constant, that
\begin{equation}\label{42}
\begin{split}
&D_{y,s}\left(T_n\bigl{(}x^{-1}u_n(x,t)\bigr)\right)=x^{-1}
\cg_n(x,t)D_{y,s}\bigl(u_n(x,t)\bigr),\;\;\mbox{where}\\
&|\cg_{n}(x,t)|\leq c_L,\ \forall (x,t)\in\cd\times[0,\infty) \ \text{a.s.}
\end{split}
\end{equation}
\item[(iii)] $u_n\in L^{1,2}\bigl(\cd\times[0,\min\{T,\tau_M,\tau_{M_{\rm
d}}\})\bigr)$.
\end{itemize}
\end{proposition}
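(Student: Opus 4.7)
The plan is to apply the Malliavin derivative to the Picard iteration scheme \eqref{cointk} from the proof of Theorem \ref{thmun}, show inductively that each iterate $u_{n,k}$ lies in $D^{1,2}$, and pass to the limit by closability of $D$. Since $u_0$ is deterministic, $u_{n,0}(x,t)=\int_{\cd}u_0(y)G(x,y,t)\,dy$ satisfies $D_{y,s}u_{n,0}\equiv 0$. Formally differentiating \eqref{cointk} --- using that the Wiener integral gives $D_{y,s}\int_0^t\!\int_{\cd}G(x,y',t-s')\sigma(y')W(dy',ds')=G(x,y,t-s)\sigma(y)$, the product rule on $u_{\tilde y}(0,\tilde s)\cdot\tilde y\,T_n(\tilde y^{-1}u_{n,k})$, and the chain rule on $T_n$ --- yields a recursion for $v_{n,k+1}:=D_{y,s}u_{n,k+1}$ with the same structure as \eqref{41} but with $u_{n,k}$ in place of $u_n$ and with $\cg_n$ replaced by $\cg_n^{(k)}(\tilde y,\tilde s):=T_n'(\tilde y^{-1}u_{n,k}(\tilde y,\tilde s))$. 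Since $T_n(v)=H_n(v)v$ gives $T_n'(v)=H_n'(v)v+H_n(v)$, with $|H_n|\le 1$ and $H_n'$ supported where $|v|\le n^{1/p}+1$ with $|H_n'|\le 2$, we get $|\cg_n^{(k)}|\le c_L:=2(n^{1/p}+1)+1$, which establishes \eqref{42} once convergence is in hand.

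The core step is a Cauchy estimate. Subtracting the recursions for $v_{n,k+1}$ and $v_{n,k}$, the second (forcing) convolution in \eqref{41} depends on the Picard iterate only through $T_n$, which is uniformly Lipschitz, so its contribution is controlled by $\|u_{n,k}-u_{n,k-1}\|_{\ch}$ --- already shown to decay factorially in the proof of Theorem \ref{thmun}. The first (linear) convolution is controlled by $Mc_L$ on $\{t<\tau_M\}$ using $|u_{\tilde y}(0,\tilde s)|\le M$ and $|\cg_n^{(k)}|\le c_L$. Taking $\|\cdot\|_{\ch}$ in $x$, then $L^2$ in $(y,s)\in\cd\times[0,T]$, then $p$-th power and supremum in $t\in[0,\min\{T,\tau_M,\tau_{M_{\rm d}}\})$ and expectation over $\Om_M$, and invoking the same integrability bound on $G_{\tilde y}$ from Proposition A6 of \cite{BH2} used in Theorem \ref{thmun}, one arrives at
\begin{equation*}
\alpha_{k+1}(t)\;\le\; A_k \;+\; B\int_0^t \alpha_k(s)\,ds,
\end{equation*}
where $\alpha_k$ measures the appropriate $L^p(\Om_M; L^2(\cd\times[0,T]; C_t(\ch)))$-norm of $v_{n,k+1}-v_{n,k}$ and $A_k$ decays factorially. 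Iteration produces a Cauchy sequence; closability of $D$ identifies its limit with $D_{y,s}u_n$, establishing (i), and passage to the limit in the recursion yields \eqref{41}, proving (ii). Uniqueness of \eqref{41} is a direct Gronwall argument on the difference of two solutions, and $D_{y,s}u_n(x,t)=0$ for $s>t$ is the usual adaptedness/causality fact. For (iii), applying the same estimate telescopically with $v_{n,0}\equiv 0$ gives a finite $L^{1,2}$-norm, the initial datum being $E\int_0^T\!\int_{\cd}\int_0^T\!\int_{\cd} G(x,y,t-s)^2\sigma(y)^2\,dy\,ds\,dx\,dt<\infty$, finite thanks to the standard $L^2$-bound on $G$ and the continuity of $\sigma$.

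The main obstacle is the second forcing term in \eqref{41}, which involves $D_{y,s}(u_{\tilde y}(0,\tilde s))=\nabla_x(D_{y,s}u)(0^+,\tilde s)$, i.e.\ the Malliavin derivative of the boundary gradient. This term cannot be absorbed into a linear Gronwall feedback on $D_{y,s}u_n$ itself, and it does not come with an a priori bound from the existence theory of \eqref{int}; this is precisely the reason for introducing the additional stopping time $\tau_{M_{\rm d}}$, which gives $|D_{y,s}(u_{\tilde y}(0,\tilde s))|\le M_{\rm d}$ and makes the forcing $L^\infty$-bounded, so that Proposition A6 of \cite{BH2} closes the estimates. A secondary technical point is the singularity of $G_{\tilde y}$ near $\tilde s=t$, which forces the choice $p>4$ throughout and is handled by the same machinery already employed in Theorems \ref{thmcon} and \ref{thmun}.
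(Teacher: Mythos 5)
Your overall architecture --- differentiate the Picard scheme \eqref{cointk}, induct on $k$, use the Lipschitz chain rule for $T_n$, invoke $\tau_{M_{\rm d}}$ to bound the forcing $D_{y,s}(u_{\tilde y}(0,\tilde s))$, and pass to the limit by closability --- is the same as the paper's, and your identification of why $\tau_{M_{\rm d}}$ is indispensable is exactly right. However, your core step, a Cauchy estimate for $v_{n,k}:=D_{y,s}u_{n,k}$, has a gap. Subtracting consecutive recursions, the linear convolution contributes $\cg_n^{(k)}v_{n,k}-\cg_n^{(k-1)}v_{n,k-1}=\cg_n^{(k)}\bigl(v_{n,k}-v_{n,k-1}\bigr)+\bigl(\cg_n^{(k)}-\cg_n^{(k-1)}\bigr)v_{n,k-1}$; you control only the first piece (by $Mc_L$). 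The cross term needs a Lipschitz bound on $T_n'$ (i.e.\ $H_n\in C^{1,1}$, stronger than the paper's $C^1$ hypothesis) \emph{and} an a priori uniform bound on $v_{n,k-1}$, which is precisely what has not yet been established at that stage, so the claimed recursion $\alpha_{k+1}\le A_k+B\int_0^t\alpha_k$ does not follow. The paper avoids this entirely: it proves only the uniform bound $\sup_{k}E\bigl(\int_0^t\int_{\cd}\|D_{y,s}u_{n,k}(\cdot,t)\|^p_{L^\infty(\cd)}\,dy\,ds\bigr)\le c(n)$ by a Gronwall argument on $A_{n,k+1}(t)$, and then applies Lemma 1.2.3 of \cite{nual} (closability of $D$ under a uniform $D^{1,2}$ bound plus $L^2$ convergence of $u_{n,k}$) to obtain $u_n\in D^{1,2}$ with $Du_{n,k}\rightharpoonup Du_n$ weakly; no Cauchy property of the derivatives is required. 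Note also that Proposition 1.2.4 of \cite{nual} delivers $\cg_n$ only as \emph{some} bounded process, not necessarily $T_n'(\tilde y^{-1}u_{n,k})$ pointwise, so your explicit formula for $\cg_n^{(k)}$ is an extra unproven identification (harmless for the bound, but not how \eqref{42} is obtained).

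Second, the closing claim that the singularity of $G_{\tilde y}$ ``forces the choice $p>4$ throughout'' is wrong and would break the induction. The paper works with $2<p<3$: the stochastic-integral term requires $\int_0^t\int_{\cd}|G(x,y,t-s)\sigma(y)|^p\,dy\,ds\le c\int_0^t(t-s)^{(1-p)/2}\,ds$, which is finite iff $p<3$, while the linear convolution is handled by the Gaussian bound $\int_{\cd}|G_{\tilde y}(x,\tilde y,t-\tilde s)|\,d\tilde y\le c|t-\tilde s|^{-1/2}$ followed by H\"older, which needs $p>2$. With $p>4$ the very first term $G(x,y,t-s)\sigma(y)$ fails to be $p$-integrable in $(y,s)$ near $s=t$, so the inductive bound could not even start. (The $p>4$ condition and Proposition A6 of \cite{BH2} are used in the continuity proof of Theorem \ref{thmcon} and in Theorem \ref{thmun}, not in this proposition, where the paper instead uses Mora's pointwise Gaussian estimates.)
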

\begin{proof} (i) We will prove that the Cauchy sequence
$\{u_{n,k}\}_{k\in\mN}$ (which is defined in Theorem \ref{thmun})
belongs to $ D^{1,2}\bigl(\cd\times[0,\min\{T,\tau_M,\tau_{M_{\rm d}}\})\bigr),\ \forall (x,t)\in \cd\times[0,\min\{T,\tau_M,\tau_{M_{\rm d}}\})$, by using induction and the Picard iteration scheme \eqref{cointk}.

For $k=0$, the function $ u_{n,0} $ is deterministic and bounded $($since $u_0\in\ch)$ with Malliavin derivative $Du_{n,0}=0$ and so $u_{n,0}\in D^{1,2}$.

Let $k\in\mN$ be fixed and $2<p<3. $ We suppose that, $\forall
i\in\mN\cap[0,k]$, it holds that
\begin{equation}
\begin{split}
&u_{n,i}(x,t)\in D^{1,p},\ \forall (x,t)\in\ocd\times[0,\min\{T,\tau_M,\tau_{M_{\rm d}}\}),\;\;\mbox{and}
\\
&\sup_{t\in[0,\min\{T,\tau_M,\tau_{M_{\rm d}}\})} \sup_{i\leq k}
\bigg[E\biggl{(}\int_0^t\int_{\cd}\|D_{y,s}u_{n,i}(\cdot,t)\|_{L^p(\cd)}^p\,dy\,ds\biggr{)}\bigg]<+\infty.
\end{split}
\end{equation}
We shall prove that the above also holds for $u_{n,k+1}$, i.e.,
$$    u_{n,k+1}(x,t)\in D^{1,p},\ \forall
(x,t)\in\ocd\times[0,\min\{T,\tau_M,\tau_{M_{\rm d}}\}),$$ and
$$\sup_{t\in[0,\min\{T,\tau_M,\tau_{M_{\rm d}}\})} \sup_{i\leq
k+1}
\bigg{[}E\biggl{(}\int_0^t\int_{\cd}\|D_{y,s} u_{n,i}(\cdot,t)\|_{L^p(\cd)}^p\,dy\,ds\biggr{)}\bigg{]}<+\infty,$$
where the bounds are independent of $k$.

Since the Malliavin derivative $D:=D_{\cdot,\cdot}$ is a linear
operator, and since $D(uv)=D(u)v+uD(v)$, and $D(u)=0$ when $u$ is
not depending on the realization, applying it at both sides of
\eqref{cointk}, we get
\begin{equation*}
\begin{split}
D_{y,s}u_{n,k+1}(x,t) =& D_{y,s}\biggl{(}\int_{\mathcal{D}}u_{0}(\Tilde{y})G(x,\Tilde{y},t)\,d\Tilde{y}\biggr{)}
+D_{y,s}\biggl{(}\int_{0}^{t}\int_{\mathcal{D}}u_{\Tilde{y}}(0,\Tilde{s})G_{\Tilde{y}}(x,\Tilde{y},t-\Tilde{s})\Tilde{y}T_{n}\bigl{(}\Tilde{y}^{-1}u_{n,k}
(\Tilde{y},\Tilde{s})\bigr{)}\,d\Tilde{y}\,d\Tilde{s}\biggr{)}\\
&+D_{y,s}\biggl{(}\int_{0}^{t}\int_{\cd}G(x,\Tilde{y},t-\Tilde{s})\sigma(\Tilde{y})W(d\Tilde{y},d\Tilde{s})\biggr{)}
\\
=&0+\int_{s}^{t}\int_{\cd}G_{\Tilde{y}}(x,\Tilde{y},t-\Tilde{s})\ D_{y,s}
    \bigl{(}u_{\Tilde{y}}(0,\Tilde{s})\Tilde{y}T_{n}\bigl{(}\Tilde{y}^{-1}u_{n,k}(\Tilde{y},\Tilde{s})\bigr{)}\bigr{)}\,d\Tilde{y}\,d\Tilde{s}+G(x,y,t-s)\sigma(y)\\
   = & \int_{s}^{t}\int_{\cd}G_{\Tilde{y}}(x,\Tilde{y},t-\Tilde{s})\
u_{\Tilde{y}}(0,\Tilde{s})\Tilde{y}D_{y,s}
\bigl{(}T_{n}\bigl{(}\Tilde{y}^{-1}u_{n,k}(\Tilde{y},\Tilde{s})\bigr{)}\bigr{)}\,d\Tilde{y}\,d\Tilde{s}
\\
&+\int_s^t\int_{\cd}G_{\Tilde{y}}(x,\Tilde{y},t-\Tilde{s})
D_{y,s}(u_{\Tilde{y}}(0,\Tilde{s}))\Tilde{y}T_n(\Tilde{y}^{-1}u_{n,k}(\Tilde{y},\Tilde{s}))\,d\Tilde{y}\,d\Tilde{s}+G(x,y,t-s)\sigma(y),
\end{split}
\end{equation*}
where we used \cite[Proposition 1.3.8]{nual}, for the stochastic integral term and the fact that the Malliavin derivative is zero when applied to the deterministic terms $ u_0,\, G,\, G_{\Tilde{y}}$, $\Tilde{y}$, and also zero for any
$\Tilde{s}<s$.

By the induction hypothesis $u_{n,k}(x,t)\in D^{1,2}$ and recall that since $H_n$ is $C^1$ with $ |H_n'|\leq 2, $ then for arbitrary $\tilde{v}$, $T_n(\tilde{v})$ is uniformly Lipschitz on $\tilde{v}$, with (deterministic) Lipschitz constant $c_L$, and so it holds that $|T_n(x^{-1}v)-T_n(x^{-1} w)|\leq c_L|x^{-1}v-x^{-1}w|, $ hence \cite[Proposition 1.2.4]{nual} (analogous to chain rule) ensures that
$$T_n\bigl{(}x^{-1}u_{n,k}(x,t)\bigr{)}\in D^{1,2}\bigl{(}\cd\times[0,\min\{T,\tau_M,\tau_{M_{\rm d}}\})\bigr{)} ,$$
and there exists a stochastic process $\mathcal{G}_{n,k}(x,t)$ such that
$$ D_{y,s}\bigl(T_n\bigl(x^{-1} u_{n,k}(x,t)\bigr)\bigr)=\cg_{n,k}(x,t)D_{y,s}(x^{-1}u_{n,k}(x,t)) = \cg_{n,k}(x,t) x^{-1} D_{y,s}(u_{n,k}(x,t)),$$
with $$|\cg_{n,k}(x,t)|\leq c_L,\; \forall x\in\ocd,\, t\in[0,\infty)\ \text{a.s.},
$$
where we used that $x^{-1}$ is deterministic and has thus zero Malliavin derivative, so $D(x^{-1}v)=x^{-1}D(v)$.

The above yields, for all $ s\leq t$,
\begin{equation}\label{dk1}
\begin{split}
D_{y,s}u_{n,k+1}(x,t)=&\int_{s}^{t}\int_{\cd}G_{\Tilde{y}}(x,\Tilde{y},t-\Tilde{s})u_{\Tilde{y}}(0,\Tilde{s})
\ \cg_{n,k}(\Tilde{y},\Tilde{s})D_{y,s}u_{n,k}(\Tilde{y},\Tilde{s})\,d\Tilde{y}\,d\Tilde{s}
\\
&+\int_{s}^{t}\int_{\cd}G_{\Tilde{y}}(x,\Tilde{y},t-\Tilde{s})
D_{y,s}(u_{\Tilde{y}}(0,\Tilde{s}))\Tilde{y}T_n(\Tilde{y}^{-1}u_{n,k}(\Tilde{y},\Tilde{s}))\,
d\Tilde{y} \, d\Tilde{s}+G(x,y,t-s)\sigma(y),
\end{split}
\end{equation}
and for any $ s>t, $
$$D_{y,s}u_{n,k+1}(x,t)=0.$$
We have $ T_n(v)=H_n(v)v $ for general
$ v, $ and $ H_n\leq 1$, hence for $v:=\Tilde{y}^{-1}u_{n,k}(\Tilde{y},\Tilde{s})$, it holds
\begin{equation*}
\big|\Tilde{y} \, T_n(\Tilde{y}^{-1}u_{n,k}(\Tilde{y},\Tilde{s}))\big| = \big|\Tilde{y}\, H_n\!\left(\Tilde{y}^{-1}\,u_{n,k}(\Tilde{y},\Tilde{s})\right)\,\Tilde{y}^{-1} \, u_{n,k}(\Tilde{y},\Tilde{s})\big|
= H_n\!\left(\Tilde{y}^{-1}u_{n,k}(\Tilde{y},\Tilde{s})\right)\, |u_{n,k}(\Tilde{y},\Tilde{s})|\\
\leq  |u_{n,k}(\Tilde{y},\Tilde{s})| .
\end{equation*}
Also, we are in the sample space $\Omega_M$ where $|u_y(0,t)|<M$ for $t< \tau_M. $ 
Therefore, taking absolute value in \eqref{dk1} and raising to $p $ power, we obtain
\begin{equation*}
\begin{split}
\big|D_{y,s}u_{n,k+1}(x,t)\big|^p \leq & c_p \biggl(\int_s^t\int_{\cd}\big|G_{\Tilde{y}}(x,\Tilde{y},t-\Tilde{s})\big|\,
\big|u_{\Tilde{y}}(0,\Tilde{s})\big| \,
\big|\cg_{n,k}(\Tilde{y},\Tilde{s})\big|\,
\big|D_{y,s}\bigl(u_{n,k}(\Tilde{y},\Tilde{s})\bigr)\big|\,
d\Tilde{y}\,d\Tilde{s}\biggr)^p
\\
&+ c_p \biggl{(}\int_s^t\int_{\cd}\big{|}G_{\Tilde{y}}(x,\Tilde{y},t-\Tilde{s})\big{|}
\big{|}D_{y,s}(u_{\Tilde{y}}(0,\Tilde{s}))\big{|}|u_{n,k}(\Tilde{y},\Tilde{s})|\,
d\Tilde{y}\,d\Tilde{s}\biggr)^p +\, c_p\big|G(x,y,t-s)\sigma(y)\big|^p\\
&\leq c_p\, c_L^p\, M^p \,\biggl(\int_s^t\int_{\cd}\big|G_{\Tilde{y}}(x,\Tilde{y},t-\Tilde{s})\big|\
\big|D_{y,s}\bigl(u_{n,k}(\Tilde{y},\Tilde{s})\bigr)\big|\,
d\Tilde{y}\,d\Tilde{s}\biggr)^p
\\
&+c_p\, M_{\rm d}^p\,\biggl(\int_s^t\int_{\cd}\big|G_{\Tilde{y}}(x,\Tilde{y},t-\Tilde{s})\big|\,
|u_{n,k}(\Tilde{y},\Tilde{s})|\,
d\Tilde{y}\,d\Tilde{s}\biggr)^p\,+\,c_p\,\big|G(x,y,t-s)\,\sigma(y)\big|^p.
\end{split}
\end{equation*}
We integrate the above for $y\in\cd,\ s\in[0,t]$ and then take expectation to derive
\begin{equation}\label{eqn1}
\begin{split}
E\biggl(\int_0^t\int_{\cd}&\left|D_{y,s}u_{n,k+1}(x,t)\right|^p\, dy\,ds\biggr)
\leq c_p \int_0^t\int_{\cd}\big|G(x,y,t-s)\,\sigma(y)\big|^p\, dy\,ds\\
&+ c_p \, c_L^p\,M^p\, E\biggl(\int_0^t\int_{\cd}\bigg(\int_s^t\int_{\cd}\big|G_{\Tilde{y}}(x,\Tilde{y},t - \Tilde{s})\big|\, \big|D_{y,s}\bigl(u_{n,k}(\Tilde{y},\Tilde{s})\bigr)\big|\, d\Tilde{y}\,d\Tilde{s}\bigg)^p dy\,ds\biggr)\\
&+ c_p \,M_{\rm d}^p \,E\biggl(\int_0^t\int_{\cd}\bigg(\int_s^t\int_{\cd}\big|G_{\Tilde{y}}(x,\Tilde{y},t-\Tilde{s})\big|\, |u_{n,k}(\Tilde{y},\Tilde{s})|\, d\Tilde{y}\,d\Tilde{s}\bigg)^p dy\,ds\biggr).
\end{split}
\end{equation}

For the first term in the right-hand side of \eqref{eqn1}, we use \eqref{ggg} and \eqref{kostasgaus} to get, for $p<3,$
\begin{flalign}
\int_0^t\int_{\cd}\big|G(x,y,t-s)\,\sigma(y)\big|^p\, dy\,ds\
&\leq \|\sigma\|^p_{L^{\infty}(\cd)} \int_0^t\int_{\cd}\big|G(x,y,t-s)\big|^p\, dy\,ds
\nonumber
\\[0.2cm]
&\leq c\, \|\sigma\|^p_{L^{\infty}(\cd)} \int_0^t(t-s)^{\frac{1-p}{2}}\ ds
\nonumber
\\[0.2cm]
&= \frac{2 c}{3-p} \, \|\sigma\|^p_{L^{\infty}(\cd)} \, t^{(3-p)/2} < + \infty ,
\label{Gest1}
\end{flalign}
where $c$ is a positive constant.

For the second term in the right-hand side of \eqref{eqn1}, we first use \eqref{gen}, then we apply H\"older inequality with exponents $p $ and $ p'=\tfrac{p}{p-1}$ in the integral w.r.t.\ $ \tilde{y} $ and we use \eqref{kostasgaus}, then we choose a $ \beta>(3-p)/(2p) $ and apply H\"older inequality in the integral w.r.t.\ $ \tilde{s}, $ to get
\allowdisplaybreaks
\begin{align*}
\bigg(\int_s^t\int_{\cd}\big{|}G_{\Tilde{y}}(x,\Tilde{y},t-\Tilde{s})\big{|}&\ \big{|}D_{y,s}\bigl{(}u_{n,k}(\Tilde{y},
\Tilde{s})\bigr{)}\big{|}\ d\Tilde{y}\,d\Tilde{s}\bigg)^p \leq
\\[0.2cm]
&\leq\biggl(\int_s^t\int_{\cd}\dfrac{c_1}{|t-\Tilde{s}|}\; \text{exp}\biggl(-\frac{c_2|x-\Tilde{y}|^2}{|t-\tilde{s}|}\biggr)
\ \big|D_{y,s}\bigl(u_{n,k}(\Tilde{y},\Tilde{s})\bigr)\big|\,d\Tilde{y}\,d\Tilde{s}\biggr)^p
\\[0.2cm]
&\leq\biggl(\int_s^t\dfrac{c_1}{|t-\Tilde{s}|} \; \big\|D_{y,s}\bigl(u_{n,k}(\cdot,\Tilde{s})\bigr)\big\|_{L^p(\cd)}
\left(\int_{\cd}\text{exp}\biggl{(}-\frac{c_2 p' |x-\Tilde{y}|^2}{|t-\tilde{s}|}\biggr)\, d\Tilde{y}\right)^{1/p'} d\Tilde{s}\biggr)^p
\\[0.2cm]
&\leq c_3 \, \biggl(\int_s^t\dfrac{1}{|t-\Tilde{s}|} \, \big\|D_{y,s}\bigl(u_{n,k}(\cdot,\Tilde{s})\bigr) \big\|_{L^p(\cd)}\, |t-\Tilde{s}|^{\frac{1}{2p'}} \, d\Tilde{s}\biggr)^p
\\[0.2cm]
&=c_3 \biggl{(}\int_s^t |t-\Tilde{s}|^{\frac{1}{2p'}-1} \, \big\|D_{y,s}\bigl(u_{n,k}(\cdot,\Tilde{s})\bigr) \big\|_{L^p(\cd)} \, d\Tilde{s}\biggr)^p
\\[0.2cm]
&\leq
c_3 \, \left(\int_s^t |t-\Tilde{s}|^{\frac{1}{2}+(\beta-1)p'}\, d\Tilde{s}\right)^{p-1}
\int_s^t |t-\Tilde{s}|^{-\beta p} \, \big{\|}D_{y,s}\bigl{(}u_{n,k}(\cdot,\Tilde{s})\bigr{)} \big\|^p_{L^p(\cd)}d\Tilde{s}
\\[0.2cm]
&= c_3\,c(p,\beta,t,s) \int_s^t  |t-\Tilde{s}|^{-\beta p} \,  \big{\|}D_{y,s}\bigl{(}u_{n,k}(\cdot,\Tilde{s})\bigr{)} \big\|^p_{L^p(\cd)}d\Tilde{s}
\\[0.2cm]
&\leq c_4 \int_0^t  |t-\Tilde{s}|^{-\beta p} \, \big{\|}D_{y,s}\bigl{(}u_{n,k}(\cdot,\Tilde{s})\bigr{)} \big\|^p_{L^p(\cd)}d\Tilde{s} ,
\end{align*}
with $ c(p,\beta, t,s)=\left(\tfrac{(t-s)^{3/2+(\beta-1)p'}}{3/2+(\beta-1)p'}\right)^{p-1} \leq \left(\tfrac{T^{3/2+(\beta-1)p'}}{3/2+(\beta-1)p'}\right)^{p-1}  =:c_4/c_3. $ 

Thus, it follows
\begin{align}
E\biggl(\int_0^t\int_{\cd}\bigg(\int_s^t\int_{\cd}\big|G_{\Tilde{y}}(x,\Tilde{y},t-\Tilde{s})&\big|\ \big|D_{y,s}\bigl(u_{n,k}(\Tilde{y},\Tilde{s})\bigr)\big|\, d\Tilde{y}\,d\Tilde{s}\bigg)^p\, dy\,ds\biggr)\leq
\nonumber\\[0.2cm]
&\leq  c_4 E\biggl(\int_0^t\int_{\cd}\int_0^t  |t-\Tilde{s}|^{-\beta p} \, \big\|D_{y,s}\bigl(u_{n,k}(\cdot,\Tilde{s})\bigr) \big\|^p_{L^p(\cd)}\ d\Tilde{s}\, dy\,ds\biggr)
\nonumber\\[0.2cm]
&=
c_4 \int_0^t E\biggl(\int_0^t  |t-\Tilde{s}|^{-\beta p} \, \int_{\cd}\big\|D_{y,s}\bigl(u_{n,k}(\cdot,\Tilde{s})\bigr) \big\|^p_{L^p(\cd)}\, dy\,ds\biggr)\,d\Tilde{s}\nonumber\\[0.2cm]
&=
c_4
\, \int_0^t E\biggl(\int_0^{\Tilde{s}}\int_{\cd}  |t-\Tilde{s}|^{-\beta p} \, \big{\|}D_{y,s}\bigl{(}u_{n,k}(\cdot,\Tilde{s})\bigr{)} \big{\|}^p_{L^p(\cd)}\, dy\,ds\biggr)\,d\Tilde{s} ,
\label{Gest2}
\end{align}
where we used the fact that the integral for $s$ is taken finally in $[0,\Tilde{s}]$, since for $s>\Tilde{s}$ the Malliavin derivative satisfies
$D_{y,s}\bigl{(}u_{n,k}(x,\Tilde{s})\bigr{)}=0$, for any
$x\in\cd$.

Regarding the last term at the right-hand side of \eqref{eqn1}, using \eqref{gy} and H\"older inequality, we obtain
\begin{align*}
E\biggl(\int_0^t\int_{\cd}\bigg|\int_s^t\int_{\cd}\big|G_{\Tilde{y}}(x,\Tilde{y},t-\Tilde{s})\big|\,
|u_{n,k}(\Tilde{y},\Tilde{s})|\, d\Tilde{y}\,d\Tilde{s}\bigg|^p dy\,ds\biggr) \leq
c\int_0^t\int_{\cd} E\biggl(\sup_{\tilde{s}\in[s,t]}\big\|u_{n,k}(\cdot,\Tilde{s})\big\|^p_{L^{\infty}(\cd)}\biggr)dy\,
ds,
\end{align*}
and then, employing the estimate \eqref{kwstas2}, we get
\begin{equation}\label{Gest3}
E\biggl(\int_0^t\int_{\cd}\bigg|\int_s^t\int_{\cd}\big|G_{\Tilde{y}}(x,\Tilde{y},t-\Tilde{s})\big|\,
|u_{n,k}(\Tilde{y},\Tilde{s})|\, d\Tilde{y}\,d\Tilde{s}\bigg|^p dy\,ds\biggr)
\leq c_5(n,p,\lambda,M,T)<+\infty.
\end{equation}

Next, integrating  \eqref{eqn1} with respect to $ x\in \mathcal D, $ and using the estimates \eqref{Gest1}, \eqref{Gest2} and \eqref{Gest3}, we obtain
\begin{multline}
E\biggl(\int_0^t\int_{\cd} \|D_{y,s}u_{n,k+1}(\cdot,t)\|^p_{L^p(\cd)}\,dy\,ds\biggr)\leq\\ c_6+ c_7\int_0^t E\biggl
(\int_0^{\Tilde{s}}\int_{\cd}  |t-\Tilde{s}|^{-\beta p} \, \big{\|}D_{y,s}\bigl(u_{n,k}(\cdot,\Tilde{s})\bigr) \big{\|}^p_{L^p(\cd)}\, dy\,ds\biggr)\, d\Tilde{s}.\label{Gest4}
\end{multline}
Taking supremum for $i\leq k$, we get
\begin{align*}
\underset{i\leq k}{\text{sup}}\bigg{[}E\biggl{(}\int_{0}^{t}\int_{\cd}\|D_{y,s}u_{n,i+1}(\cdot,t)&\|^p_{L^p(\cd)}\,dy\,ds\biggr{)}\bigg{]} \leq\\
&\leq c_6+ c_7\int_0^t\underset{i\leq k}{\text{sup}}\bigg{[}E\biggl{(}\int_{0}^{\Tilde{s}}\int_{\cd}  |t-\Tilde{s}|^{-\beta p} \, \big{\|}D_{y,s}\bigl{(}u_{n,i}(\cdot,\Tilde{s})\bigr{)}
\big{\|}^p_{L^p(\cd)}\, dy\,ds\biggr{)}\bigg{]} d\Tilde{s}\\[0.2cm]
&\leq c'+c\int_0^t\underset{i\leq k}{\text{sup}}\bigg{[}E\biggl{(}\int_{0}^{\Tilde{s}}\int_{\cd}  |t-\Tilde{s}|^{-\beta p} \, \big{\|}D_{y,s}
\bigl{(}u_{n,i+1}(\cdot,\Tilde{s})\bigr{)} \big{\|}^p_{L^p(\cd)}\,dy\,ds\biggr{)}\bigg{]}
d\Tilde{s},
\end{align*}
or equivalently,

\begin{equation}\label{dd1}
\begin{split}
\underset{i\leq k+1}{\text{sup}}\bigg{[}E\biggl{(}\int_0^t&\int_{\cd}\|D_{y,s}u_{n,i}(\cdot,t)\|^p_{L^p(\cd)}\,dy\,ds\biggr{)}\bigg{]}\leq\\
&\leq c' + c\int_{0}^{t}\underset{i\leq k+1}{\text{sup}}\bigg{[}E\biggl{(}\int_0^{\Tilde{s}}\int_{\cd}  |t-\Tilde{s}|^{-\beta p} \, \big{\|}D_{y,s}\bigl{(}u_{n,i}(\cdot,\Tilde{s})\bigr{)} \big{\|}^p_{L^p(\cd)}\, dy\,ds\biggr{)}\bigg{]} d\Tilde{s}.
\end{split}
\end{equation}

\noindent
We now define
\begin{align*}
A_{n,k+1}(t):= \underset{i\leq k+1}{\text{sup}}\bigg{[}E\biggl{(}\int_0^t\int_{\cd}\|D_{y,s}u_{n,i}(\cdot,t)\|^p_{L^p(\cd)}\,dy\,ds\biggr{)}\bigg{]},
\end{align*}
and \eqref{dd1} becomes
\begin{equation}\label{gron}
A_{n,k+1}(t) \leq c'+ c\int_0^t  |t-\Tilde{s}|^{-\beta p} \, A_{n,k+1}(\Tilde{s})\, d\Tilde{s}.
\end{equation}
By \eqref{gron} we get 
\begin{eqnarray}\label{gron2}
\int_0^t  |t-\Tilde{s}|^{-\beta p} \, A_{n,k+1}(\Tilde{s})\, d\Tilde{s}
&\leq&
c' \int_0^t  |t-\Tilde{s}|^{-\beta p} \, d\Tilde{s}
+
c\int_0^t \int_0^{\tilde{s}} |t-\Tilde{s}|^{-\beta p}\, |\tilde{s}-\tau|^{-\beta p} \, A_{n,k+1}(\tau)\, d\tau \, d\tilde{s}
\nonumber
\\[0.5em]
&=&
C_1
+
c \int_0^t \left( \int_{\tau}^{t} |t-\Tilde{s}|^{-\beta p}\, |\tilde{s}-\tau|^{-\beta p} \, d\tilde{s} \right) \, A_{n,k+1}(\tau) \, d\tau
\nonumber
\\[0.5em]
&\leq& C_1 + C_2 \int_0^t   A_{n,k+1}(\tau) \, d\tau , \label{gron3}
\end{eqnarray}
where we used that
$$ c'\int_0^t  |t-\Tilde{s}|^{-\beta p} \, d\Tilde{s} = \frac{c'\, t^{1- \beta p}}{1-\beta p} \leq \frac{c'\, T^{1- \beta p}}{1-\beta p} =: C_1  <+\infty , $$
and
\begin{eqnarray}
c\int_\tau^t|t-\Tilde{s}|^{-\beta p} \, |\tilde{s}-\tau|^{-\beta p} \,  \,d\tilde{s}
& \leq &
c\left(\int_\tau^t|t-\Tilde{s}|^{-2 \beta p} \,d\tilde{s}\right)^{1/2} 
\left( \int_\tau^t|\tilde{s}-\tau|^{-2 \beta p} \, \,d\tilde{s}\right)^{1/2}
\nonumber\\[0.5em]
&=&
c\left(\frac{1}{1-2 \beta p}\, |t-\tau|^{1-2 \beta p}\right)^{1/2} \, \left(\frac{1}{1-2 \beta p}\, |t-\tau|^{1-2 \beta p}\right)^{1/2}
\nonumber\\[0.5em]
&=&
\frac{c}{1-2 \beta p}\, |t-\tau|^{1-2 \beta p}
\nonumber\\[0.5em]
&\leq &
\frac{c\, T^{1-2 \beta p}}{1-2 \beta p} =: C_2  <+\infty ,\label{hineq}
\end{eqnarray}
for $ \beta<1/(2p). $ Using \eqref{gron3} in the last term of \eqref{gron}, we obtain
$$ A_{n,k+1}(t) \leq C_3 + C_4 \int_0^t A_{n,k+1}(\tau)\, d\tau , $$
so, by Gr\"onwall's lemma, we get
\begin{equation*}
\underset{i\leq k+1}{\text{sup}}\bigg{[}E\biggl{(}\int_{0}^{t}\int_{\cd}\|D_{y,s}u_{n,i}(\cdot,t)\|^p_{L^p(\cd)}\, dy\,ds\biggr{)}\bigg{]}=A_{n,k+1}(t)\leq
c(n,p,M,T),
\end{equation*}
therefore,
\begin{equation}\label{ind1}
\sup_{t\in[0,\min\{T,\tau_M,\tau_{M_{\rm d}}\})} \sup_{i\leq
k+1} \bigg{[}E\biggl{(}\int_0^t\int_{\cd}\|D_{y,s}u_{n,i}(\cdot,t)\|^p_{L^p(\cd)}\,
dy\,ds\biggr{)}\bigg{]}\leq c(n,p,M,T),
\end{equation}
which gives
$$\underset{t\in[0,\min\{T,\tau_M,\tau_{M_{\rm d}}\})}{\text{sup}}\underset{i\leq k+1}{\text{sup}}\bigg{[}E\biggl{(}\int_0^{\min\{T,\tau_M,\tau_{M_{\rm d}}\}}\hspace*{-0.2cm}
\int_{\cd}\|D_{y,s}u_{n,i}(\cdot,t)\|^p_{L^p(\cd)}\,dy\,ds\biggr{)}\bigg{]}\leq
 c(n,p,M,T).$$
In the previous, we used that $D_{y,s}u_{n,i}(x,t)=0,\ \forall
s>t$ and so the integration is for $ s\in[0,\min\{T,\tau_M,\tau_{M_{\rm d}}\})$, while we note that the bound is independent of $k$.

By the estimate \eqref{eqn1} combined with \eqref{Gest1}, \eqref{Gest2}, \eqref{Gest3} and \eqref{ind1}, we obtain

\begin{eqnarray}\label{kwstas3}
E\biggl(\int_0^{\min\{T,\tau_M,\tau_{M_{\rm d}}\}}\hspace*{-0.2cm}
\int_{\cd}|D_{y,s}u_{n,k+1}(x,t)|^p \,dy\,ds\biggr)
\nonumber
&=&
E\biggl(\int_0^t \int_{\cd}|D_{y,s}u_{n,k+1}(x,t)|^p dy\,ds\biggr)
\\
&\leq & C(n,p,M,T)<+\infty .
\end{eqnarray}

Then, we have that
\allowdisplaybreaks
\begin{align*}
&\|u_{n,k+1}(x,t)\|^{2}_{D^{1,2}(\cd\times[0,\min\{T,\tau_M,\tau_{M_{\rm d}}\}))}:=
E\bigl{(}|u_{n,k+1}(x,t)|^{2}\bigr{)}+E\biggl{(}\int_{0}^{\min\{T,\tau_M,\tau_{M_{\rm d}}\}}\hspace*{-0.2cm}\int_{\cd}|D_{y,s}u_{n,k+1}(x,t)|^2 dy ds\biggr{)}
\\[0.2cm]
&\leq  E\bigl(|u_{n,k+1}(x,t)|^p\bigr)^{2/p}+ c E\biggl(\int_0^{\min\{T,\tau_M,\tau_{M_{\rm d}}\}}\hspace*{-0.2cm}
\int_{\cd}|D_{y,s}u_{n,k+1}(x,t)|^p dy\,ds\biggr)^{2/p}\,,
\end{align*}
which is bounded uniformly for any $k\in\mN$, due to the estimates \eqref{kwstas2} and \eqref{kwstas3}. Therefore, $ u_{n,k+1}(x,t)\in
D^{1,2}(\cd\times[0,\min\{T,\tau_M,\tau_{M_{\rm
d}}\})) $ and the induction is completed.

Since $u_{n,k}\rightarrow u_{n}$ in
$L^p\bigl{(}\Om_M;C([0,\min\{T,\tau_M,\tau_{M_{\rm d}}\});\ch)\bigr{)}$ for $p>2$, we also have that
$u_{n,k}\rightarrow u_{n}$ in $L^2\bigl{(}\Om_M;C([0,\min\{T,\tau_M,\tau_{M_{\rm
d}}\});\ch)\bigr{)}, $ by H\"older's inequality on the expectation as $p>2$.

Furthermore, we've proven that $u_{n,k}(x,t)\in D^{1,2},\ \forall
k\in\mN, \; u_{n,k}\to u_n $ in $ L^2(\Omega_M), $ and it holds that
\begin{equation}
\sup_{k\in\mN} \|D_{.,.}u_{n,k}(x,t)\|^2_{L^2(\Om_M\times\cd
\times[0,\min\{T,\tau_M,\tau_{M_{\rm d}}\}))} = \underset{k\in\mN}{\text{sup}}\bigg[E\biggl{(}\int_0^{\min\{T,\tau_M,\tau_{M_{\rm d}}\}}\hspace*{-0.2cm}\int_{\cd}
|D_{y,s}u_{n,k}(x,t)|^2\, dyds\biggr{)}\bigg] < + \infty \,,
\end{equation}
thus, \cite[Lemma 1.2.3]{nual} asserts that $u_n(x,t)\in D^{1,2}$ and
\begin{align*}
D_{y,s}u_{n,k}(x,t)\rightarrow D_{y,s}u_{n}(x,t)\ \ \ \text{as}\ k\rightarrow+\infty ,
\end{align*}
in the weak topology of
$L^2\bigl{(}\Om_M\times\cd\times[0,\min\{T,\tau_M,\tau_{M_{\rm
d}}\})\bigr{)}$.
\\[0.5cm]
$(ii)$ Taking Malliavin derivative in both sides of \eqref{coint}
for $\eta:=0$, and using the analogous calculus and arguments, we
obtain $\forall (x,t)\in\cd\times[0,\min\{T,\tau_M,\tau_{M_{\rm
d}}\})$ and for any $s\leq t$,
\begin{eqnarray*}
D_{y,s}u_n(x,t)&=&G(x,y,t-s)\sigma(y) \,+\,\int_s^t\int_{\cd}G_{\Tilde{y}}(x,\Tilde{y},t-\Tilde{s})u_{\Tilde{y}}(0,\Tilde{s})
\ \cg_{n}(\Tilde{y},\Tilde{s})D_{y,s}u_{n}(\Tilde{y},\Tilde{s})\
d\Tilde{y}d\Tilde{s}\\
& &+\int_s^t\int_{\cd}G_{\Tilde{y}}(x,\Tilde{y},t-\Tilde{s})
D_{y,s}(u_{\Tilde{y}}(0,\Tilde{s}))\Tilde{y}T_n(\Tilde{y}^{-1} u_n(\Tilde{y},\Tilde{s}))\,
d\Tilde{y}\,d\Tilde{s},
\end{eqnarray*}
(i.e.~\eqref{41} is satisfied) while $\forall s>t$ it holds $D_{y,s}u_n(x,t)=0$ and $\cg_{n}$ is a random variable that satisfies \eqref{42}.

It remains to show the uniqueness of solution of \eqref{41}. If $\Tilde{D}_{y,s}u_{n}(x,t)$ is another solution of \eqref{41}, then since by definition of $\tau_{M_{\rm d}}$ the term $D_{y,s}(u_{\Tilde{y}}(0,\Tilde{s}))$ exists uniquely when $\Tilde{s}\in[0,\min\{T,\tau_M,\tau_{M_{\rm d}}\})$, through linearity and application of the same arguments, we get the analogous results.

More specifically, let $t\in [0,\min\{T,\tau_M,\tau_{M_{\rm d}}\})$ and defining
\begin{align*}
B_n(t) := E\biggl(\int_0^t\int_{\cd}\|D_{y,s}u_n(\cdot,t)-\Tilde{D}_{y,s}u_n(\cdot,t)\|^p_{L^p(\cd)}\,dy\,ds\biggr) ,
\end{align*}
we can analogously derive, as the first and last term of
\eqref{41} are common for $D_{y,s}$ and $\Tilde{D}_{y,s}$ and by
subtraction vanish, that
\begin{align*}
B_n(t)\leq c\int_0^t |t-\Tilde{s}|^{-\beta p} \, B_n(\Tilde{s})\, d\Tilde{s} ,
\end{align*}
and thus, by Gr\"onwall's lemma, we get $B_n(t)=0$ for any $t$, i.e.,

\begin{align*}
E\biggl(\int_0^t\int_{\cd}\|D_{y,s}u_n(\cdot,t)-\Tilde{D}_{y,s}u_n(\cdot,t)\|^p_{L^p(\cd)}\,dy\,ds\biggr)=0,\quad \forall t\in[0,\min\{T,\tau_M,\tau_{M_{\rm d}}\}) ,
\end{align*}
which yields the desired uniqueness of solution of \eqref{41}.
\\[0.5cm]
$(iii)$ Since $D_{.,.}u_{n,k}(x,t)\rightarrow D_{.,.}u_n(x,t)$
as $k\rightarrow+\infty$ weakly in
$L^2\bigl{(}\Om_M\times\cd\times[0,\min\{T,\tau_M,\tau_{M_{\rm d}}\})\bigr{)}$ for any
$(x,t)\in\cd\times[0,\min\{T,\tau_M,\tau_{M_{\rm d}}\})$, we have

\allowdisplaybreaks
\begin{align*}
E\biggl{(}\int_0^{\min\{T,\tau_M,\tau_{M_{\rm d}}\}}\int_{\cd}|D_{y,s}u_n(x,t)|^2\,dy\,ds\biggr{)}&= \|D_{.,.}u_n(x,t)\|^2_{L^2(\Om_M\times\cd\times[0,\min\{T,\tau_M,\tau_{M_{\rm d}}\}))}
\\[0.2cm]
&\leq \liminf_{k\rightarrow+\infty} \big(\|D_{.,.}u_{n,k}(x,t)\|^2_{L^2(\Om_M\times\cd\times[0,\min\{T,\tau_M,\tau_{M_{\rm d}}\}))}\big)
\\[0.2cm]
&=\lim_{k\rightarrow+\infty} \big(\underset{m\geq k}{\text{inf}}\|D_{.,.}u_{n,m}(x,t)\|^2_{L^2(\Om_M\times\cd\times[0,\min\{T,\tau_M,\tau_{M_{\rm d}}\}))}\big)
\\[0.2cm]
&\leq \underset{k\rightarrow+\infty}{\text{lim}}\big(\underset{m\geq k}{\text{sup}}\|D_{.,.}u_{n,m}(x,t)\|^2_{L^2
(\Om_M\times\cd\times[0,\min\{T,\tau_M,\tau_{M_{\rm d}}\}))}\big)
\\[0.2cm]
&\leq \sup_{k\in\mN} \|D_{.,.}u_{n,k}(x,t)\|^2_{L^2(\Om_M\times\cd\times[0,\min\{T,\tau_M,\tau_{M_{\rm d}}\}))}\leq c(n),
\end{align*}
and thus

\begin{align*}
&\|Du_{n}\|^2_{L^{2}(\Om_M\times(\cd\times[0,\min\{T,\tau_M,\tau_{M_{\rm d}}\}))^{2})}
:=
E\biggl{(}\int_0^{\min\{T,\tau_M,\tau_{M_{\rm d}}\}}\int_{\mathcal{D}}\int_0^{\min\{T,\tau_M,\tau_{M_{\rm d}}\}}\int_{\cd}\big{|}D_{y,s}u_n(x,t)\big{|}^{2}\, dydsdxdt\biggr{)}
\\
&= \int_0^{\min\{T,\tau_M,\tau_{M_{\rm d}}\}}\int_{\cd}E\biggl{(}\int_0^{\min\{T,\tau_M,\tau_{M_{\rm d}}\}}
\int_{\cd}\big{|}D_{y,s}u_{n}(x,t)\big{|}^{2}\,dy\,ds\biggr{)}dx\,dt < +\infty.
\end{align*}
Also, we have
\begin{align*}
\|u_n\|^2_{L^2(\Om_M\times\cd\times[0,\min\{T,\tau_M,\tau_{M_{\rm d}}\}))}&:=E\biggl{(}\int_0^{\min\{T,\tau_M,\tau_{M_{\rm d}}\}}
\int_{\cd}|u_n(x,t)|^2\,dx\,dt\biggr{)}\\[0.2cm] &\leq \int_0^{\min\{T,\tau_M,\tau_{M_{\rm d}}\}}E\biggl{(}\int_{\cd}|u_n(x,t)|^{2}\,dx\biggr{)}dt
\\[0.2cm]
&\leq \lambda \int_0^{\min\{T,\tau_M,\tau_{M_{\rm d}}\}}E\bigl{(}\|u_n(\cdot,t)\|^2_{L^{\infty}(\cd)}\bigr{)}dt
\\[0.2cm]
&\leq c \underset{t\in[0,\min\{T,\tau_M,\tau_{M_{\rm d}}\})}{\text{sup}}E\bigl{(}\|u_n(\cdot,t)\|^2_{L^{\infty}(\cd)}\bigr{)}<+\infty ,
\end{align*}
by \eqref{bound}. Therefore,
\begin{multline*}
\|u_n\|_{L^{1,2}(\cd\times[0,\min\{T,\tau_M,\tau_{M_{\rm d}}\}))}
\\
:=\left(\|u_{n}\|^{2}_{L^{2}(\Om_M\times\cd\times[0,\min\{T,\tau_M,\tau_{M_{\rm d}}\}))}
+\|Du_{n}\|^2_{L^2(\Om_M\times(\cd\times[0,\min\{T,\tau_M,\tau_{M_{\rm d}}\}))^2)}\right)^{1/2}
\, < +\infty, \qquad
\end{multline*}
which yields $u_n\in L^{1,2}\bigl{(}\cd\times[0,\min\{T,\tau_M,\tau_{M_{\rm
d}}\})\bigr{)}$.
\end{proof}

This section's main theorem is a direct consequence of the
previous arguments.

\begin{theorem}
The solution $u$ of (\ref{int}) for $\eta:=0$ belongs to $L^{1,2}_{\text{loc}}\bigl(\cd\times[0,\min\{T,\tau_M,\tau_{M_{\rm d}}\})\bigr)
\subseteq D^{1,2}_{\text{loc}}\bigl(\cd\times[0,\min\{T,\tau_M,\tau_{M_{\rm d}}\})\bigr)$.
\end{theorem}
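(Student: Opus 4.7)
The plan is to verify directly from the definitions of the local Malliavin spaces that the sequence $\{(\Omega_M^n, u_n)\}_{n\in\mathbb{N}}$ constructed in Section~3 serves as an admissible localization of $u$. Specifically, the definition of $L^{1,2}_{\text{loc}}(\cd\times[0,\min\{T,\tau_M,\tau_{M_{\rm d}}\}))$ requires exhibiting a sequence of measurable sets $\Omega_M^n\uparrow\Omega_M$ a.s.\ together with stochastic processes $u_n\in L^{1,2}(\cd\times[0,\min\{T,\tau_M,\tau_{M_{\rm d}}\}))$ such that $u=u_n$ a.s.\ on $\Omega_M^n$. Each of these three requirements has been established in the preceding material.

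First, I would recall that the nested inclusion $\Omega_M^1\subseteq\Omega_M^2\subseteq\cdots\subseteq\Omega_M$ together with $\Omega_M^n\uparrow\Omega_M$ as $n\to\infty$ was observed right after~\eqref{ommn}, and that $\Omega_M^n$ is nonempty for $n\geq n_0$ by Remark~\ref{nonemp}. Next, I would invoke Proposition~\ref{malun}(iii), which gives $u_n\in L^{1,2}(\cd\times[0,\min\{T,\tau_M,\tau_{M_{\rm d}}\}))$ for every $n\in\mathbb{N}$. Finally, I would recall from the discussion preceding Theorem~\ref{thmun} that on $\Omega_M^n$ the truncation is inactive, i.e.\ $y T_n(y^{-1}u(y,s,\omega))=u(y,s,\omega)$ for all $s\in[0,t)$ and $y\in\cd$, so that~\eqref{coint} reduces to~\eqref{int} with $\eta:=0$; combined with the pathwise uniqueness for both equations (established in Theorem~\ref{thmun} via the Cauchy construction and Gronwall argument, using the continuity supplied by Theorem~\ref{thmcon}) and the fact that we have restricted to times $t<\min\{T,\tau_M,\tau_{M_{\rm d}}\}\leq\min\{T,\tau_M\}$, one obtains $u=u_n$ a.s.\ on $\Omega_M^n$ for every $n$.

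Putting these pieces together according to the definition of $L^{1,2}_{\text{loc}}$, the pair $\{(\Omega_M^n,u_n)\}_{n\in\mathbb{N}}$ is a valid localizing sequence, hence $u\in L^{1,2}_{\text{loc}}(\cd\times[0,\min\{T,\tau_M,\tau_{M_{\rm d}}\}))$. The inclusion $L^{1,2}_{\text{loc}}\subseteq D^{1,2}_{\text{loc}}$ was already noted in the paragraph following the definition of the local spaces, and it yields the second assertion without additional work; moreover the Malliavin derivative of $u$ is well defined without ambiguity by $D_{y,s}u=D_{y,s}u_n$ on $\Omega_M^n$ for every $n$, with $D_{y,s}u_n$ given uniquely by~\eqref{41}.

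No genuine obstacle is expected: all of the analytical work (the cut-off well-posedness, the pathwise continuity needed to pass from a.s.-equality at fixed $(x,t)$ to indistinguishability of trajectories on $\Omega_M^n$, the uniform $D^{1,2}$ bounds on the Picard iterates, and the Gronwall-type uniqueness for the Malliavin derivative equation) has been carried out in Sections~2 and~3 and in Proposition~\ref{malun}. The only point requiring a little care is making sure one uses the smaller time horizon $\min\{T,\tau_M,\tau_{M_{\rm d}}\}$ throughout so that Proposition~\ref{malun} applies on each $\Omega_M^n$; since this stopping time is dominated by $\min\{T,\tau_M\}$, the relation $u=u_n$ on $\Omega_M^n$ automatically persists on the restricted interval, and the conclusion follows.
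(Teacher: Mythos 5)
Your proposal is correct and follows essentially the same route as the paper, whose proof is simply the one-line observation that $(\Omega_M^n,u_n)$ is the required localizing sequence with $u_n\in L^{1,2}\subseteq D^{1,2}$; you merely spell out the three ingredients (the a.s.\ increase $\Omega_M^n\uparrow\Omega_M$, the membership $u_n\in L^{1,2}$ from Proposition~\ref{malun}(iii), and the identification $u=u_n$ a.s.\ on $\Omega_M^n$ from the cut-off being inactive there) in more detail. Your extra remark about restricting to the smaller horizon $\min\{T,\tau_M,\tau_{M_{\rm d}}\}$ is a sensible clarification but does not change the argument.
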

\begin{proof}
The proof follows since we constructed a localization of $u$, by $(\Omega_M^n,u_n),\ n\in\mN$ with $u_n\in L^{1,2}\subseteq D^{1,2}$.
\end{proof}

\subsection{Existence of density}
For the solution $u$ of \eqref{int} for $\eta=0, $ we will show that
\begin{equation}\label{NualThm213}
P\left(\int_0^{\min\{T,\tau_M,\tau_{M_{\rm d}}\}}\int_{\cd}|D_{y,s}u(x,t)|^2 \,dy\,ds>0\right) =1 ,
\end{equation}
which, by \cite[Theorem 2.1.3]{nual}, implies that the law of $ u(x,t) $ is absolutely continuous with respect to the Lebesgue measure on $\mathbb R. $ By a localization argument (see \cite[Remark 3.1]{JDE18}), for proving  \eqref{NualThm213}, it is enough to show 
$$ P\left(\int_0^{\min\{T,\tau_M,\tau_{M_{\rm d}}\}}\int_{\cd}|D_{y,s}u_n(x,t)|^2 \,dy\,ds>0\right) =1 ,$$for the solution $ u_n$ of \eqref{coint}. To this end, we first prove two very important estimates, as in \cite{web,JDE18}.

We keep the definitions of $\Omega_M$ and $\tau_M$, and $\tau_{M_d}$ of the previous sections.

\begin{proposition}
Let a given deterministic $T>0 $ and $ 2 \leq p < 3. $ For any $b\in(0,\min\{T,\tau_M,\tau_{M_{\rm d}}\})$ and any $\epsilon\in(0,b]$, there exists a constant $ c > 0 $ independent of $\epsilon, $ such that
\begin{equation}\label{51}
\sup_{t\in[b-\epsilon,b]}
E\biggl{(}\int_{b-\epsilon}^b\sup_{x\in\mathcal{D}} \Big{[}\int_{\mathcal{D}}|D_{y,s}u_n(x,t)|^p dy\Big{]} ds\biggr{)}\leq c\,\epsilon^{\frac{3-p}{2}},
\end{equation}
and, for any $ \delta\in (0,1), $ there exists a constant $ c > 0 $ independent of $\epsilon, $ such that
\begin{equation}\label{52}
\sup_{t\in [\epsilon,\min\{T,\tau_M,\tau_{M_{\rm
d}}\})} E\biggl{(}\int_{t-\epsilon}^t \sup_{x\in\mathcal{D}}
\Big{[}\int_{\mathcal{D}}|D_{y,s}u_n(x,t)-G(x,y,t-s)\sigma(y)|^p\,dy\Big{]}
ds\biggr{)}\\
\leq c\,\epsilon^{1-\delta}.
\end{equation}
\end{proposition}
\begin{proof}We denote the last term of \eqref{41} by
\begin{equation}\label{lt}
A(x,y,s,t):=\int_s^t\int_{\cd}G_{\Tilde{y}}(x,\Tilde{y},t-\Tilde{s})
D_{y,s}(u_{\Tilde{y}}(0,\Tilde{s}))\Tilde{y}T_n(\Tilde{y}^{-1}u_{n}(\Tilde{y},\Tilde{s}))\,d\Tilde{y}\,d\Tilde{s}.
\end{equation}

Taking absolute value and then raising to the $p$
power in \eqref{41}, we get, for some constant $c_p>0,$
\begin{eqnarray}
|D_{y,s}u_{n}(x,t)|^{p}&\leq&
c_p |G(x,y,t-s)\sigma(y)|^{p} 
\nonumber\\[0.5em]
& &+ c_p  \bigg{|}\int_s^t
\int_{\cd}G_{\Tilde{y}}(x,\Tilde{y},t-\Tilde{s})u_{\Tilde{y}}(0,\Tilde{s})\cg_{n}(\Tilde{y},\Tilde{s})D_{y,s}u_n(\Tilde{y},\Tilde{s})\,d\Tilde{y}\,d\Tilde{s}\bigg{|}^p
+ c_p |A|^p\;.
\label{ttt3}
\end{eqnarray}

Let $b\in[0,\min\{T,\tau_M,\tau_{M_{\rm d}}\})$ and $\epsilon\in(0,b]$. From now on we work for times $t\in[b-\epsilon,b]$. In what follows, $c $ denotes a positive constant independent of $x, t $ and $ \epsilon, $ and $c$ may change from one inequality to another.

In \eqref{ttt3}, we integrate in $y\in\mathcal{D}$, then take supremum in $x$ and integrate for $s\in[b-\epsilon,t]$, and then take expectation, to arrive at
\begin{flalign}\label{totre}
E\biggl{(}\int_{b-\epsilon}^{t}\sup\limits_{x\in\mathcal{D}} &\int_{\mathcal{D}}|D_{y,s}u_{n}(x,t)|^{p}dy \, ds\biggr{)}
\leq c_p\int_{b-\epsilon}^{t}\displaystyle{\sup_{x\in\mathcal{D}}}\int_{\mathcal{D}}|G(x,y,t-s)\sigma(y)|^p\,dy\,ds 
\nonumber\\
&+c_pE\biggl{(}\int_{b-\epsilon}^t\displaystyle{\sup_{x\in\mathcal{D}}}\int_{\mathcal{D}}\bigg{|}\int_{s}^t\int_{\cd}G_{\Tilde{y}}(x,\Tilde{y},t-\Tilde{s})\
[u_{\Tilde{y}}(0,\Tilde{s})\cg_{n}(\Tilde{y},\Tilde{s})D_{y,s}u_{n}(\Tilde{y},\Tilde{s})]\,d\Tilde{y}\,d\Tilde{s}\bigg{|}^pdy\,ds\biggr{)}
\nonumber\\
& + c_p E\biggl{(}\int_{b-\epsilon}^t\displaystyle{\sup_{x\in\mathcal{D}}}\int_{\mathcal{D}} |A|^pdy\, ds\biggr{)}
\nonumber\\
=: &\; \mathcal B_1+\mathcal B_2+\mathcal B_3.
\end{flalign}

Integrating in $y$ and using \eqref{ggg} and \eqref{kostasgaus}, we have
\begin{eqnarray}
\int_{\mathcal{D}}|G(x,y,t-s)|^p\,|\sigma(y)|^p\,dy&\leq &
\,c\, |t-s|^{-\frac{p}{2}}\int_{\mathcal{D}}\exp\Big{(}\!-c\,\frac{|x-y|^{2}}{|t-s|}\Big{)}\,|\sigma(y)|^p\,dy\nonumber\\
&\leq & 
\,c\, |t-s|^{-\frac{p}{2}}\int_{\mathcal{D}}\exp\Big{(}\!-c\frac{|x-y|^{2}}{|t-s|}\Big{)}dy\nonumber\\
&\leq&
\,c\,|t-s|^{-\frac{p}{2}}|t-s|^{\frac{1}{2}}=c|t-s|^{\frac{1-p}{2}}.\label{ffff}
\end{eqnarray}
The bound \eqref{ffff} is integrable in time if $p<3$. Using \eqref{ffff}, we get
\begin{eqnarray}\label{1term}
\mathcal{B}_1&:=&c_p\int_{b-\epsilon}^t\displaystyle{\sup_{x\in\mathcal{D}}}\int_{\mathcal{D}}|G(x,y,t-s)\sigma(y)|^pdy\,ds
\nonumber\\
&\leq& c\int_{b-\epsilon}^{t}|t-s|^{\frac{1-p}{2}} ds
\nonumber\\
&=&c\,(t-b+\epsilon)^{\frac{3-p}{2}}.
\end{eqnarray}
Regarding the term $\mathcal{B}_2$ in \eqref{totre}, we have
\begin{eqnarray}
\mathcal{B}_2 &:= & c_p E\left(\int_{b-\epsilon}^{t} \sup\limits_{x\in\mathcal{D}} \int_{\mathcal{D}}
\bigg{|}\int_s^t\int_{\cd}G_{\Tilde{y}}(x,\Tilde{y},t-\Tilde{s})\
[u_{\Tilde{y}}(0,\Tilde{s})\cg_{n}(\Tilde{y},\Tilde{s})D_{y,s}u_{n}(\Tilde{y},\Tilde{s})]\,d\Tilde{y}\,d\Tilde{s}\bigg{|}^pdy\,ds\right)
\nonumber\\
&\leq&cE\biggl{(}\int_{b-\epsilon}^t\sup\limits_{x\in\mathcal{D}} \int_{\mathcal{D}}
\bigg{|}\int_s^t \int_{\cd}|G_{\Tilde{y}}(x,\Tilde{y},t-\Tilde{s})|\
|D_{y,s}u_{n}(\Tilde{y},\Tilde{s})|\
d\Tilde{y}d\Tilde{s}\bigg{|}^{p}dy\,ds\biggr{)}
\nonumber\\
&\leq&cE\biggl{(}\int_{b-\epsilon}^t \sup_{x\in\mathcal{D}} \int_{\mathcal{D}}
\bigg{|}\int_s^t\dfrac{1}{|t-\Tilde{s}|} \; \|D_{y,s}u_n(\cdot,\Tilde{s})\|_{L^p(\mathcal{D})} \left(\int_{\cd}\text{exp}\biggl{(}-\frac{c |x-\Tilde{y}|^2}{|t-\tilde{s}|}\biggr)\, d\Tilde{y}\right)^{1/p'}\!d\Tilde{s}\bigg{|}^p dy\,ds\biggr{)}
\nonumber\\
&\leq&cE\biggl{(}\int_{b-\epsilon}^{t}\int_{\mathcal{D}}
\bigg{|}\int_{s}^{t}|t-\tilde{s}|^{\frac{1}{2p'}-1} \|D_{y,s}u_n(\cdot,\Tilde{s})\|_{L^p(\mathcal{D})}\,d\Tilde{s}\bigg{|}^pdy\,ds\biggr{)}
\nonumber\\
&\leq& c \, E\biggl{(}\int_{b-\epsilon}^t \int_{\mathcal{D}} \left(\int_s^t |t-\tilde{s}|^{-\beta p}
 \|D_{y,s}u_n(\cdot,\Tilde{s})\|_{L^p(\mathcal{D})}^p\,d\Tilde{s}\right) dy \,ds\biggr{)}
\nonumber\\
&=&c \, E\biggl{(}\int_{b-\epsilon}^t \int_{b-\epsilon}^{\tilde{s}} \int_{\mathcal{D}} |t-\tilde{s}|^{-\beta p} \, \|D_{y,s}u_n(\cdot,\Tilde{s})\|_{L^p(\mathcal{D})}^p\,dy\,ds\,d\Tilde{s}\biggr{)}
\nonumber\\
&=&c \, \int_{b-\epsilon}^t E\biggl{(}\int_{b-\epsilon}^{\tilde{s}} |t-\tilde{s}|^{-\beta p} \,\int_{\mathcal D} \int_{\mathcal{D}} |D_{y,s}u_n(\tilde{y},\Tilde{s})|^p\,d\tilde{y}\, dy\,ds\biggr{)} \,d\Tilde{s}
\nonumber\\
&=&c \, \int_{b-\epsilon}^t E\biggl{(}\int_{b-\epsilon}^{\tilde{s}} |t-\tilde{s}|^{-\beta p} \, \int_{\mathcal{D}}\left(\int_{\mathcal D} |D_{y,s}u_{n}(\tilde{y},\Tilde{s})|^p\, dy\right) d\tilde{y} \,ds\biggr{)}\,d\Tilde{s}
\nonumber\\
&\leq & c 
\,\int_{b-\epsilon}^{t}|t-\tilde{s}|^{-\beta p} E\left(\int_{b-\epsilon}^{\tilde{s}} \sup\limits_{\tilde{y}\in\mathcal{D}} \int_{\mathcal{D}}
|D_{y,s}u_n(\Tilde{y},\Tilde{s})|^p\,dy\,ds\right)d\Tilde{s} ,
\label{sos}
\end{eqnarray}
where we used for the first inequality the uniform
upper-boundedness of $|\cg_{n}|$ by the deterministic constant
$C_L$, and also the upper-boundedness of
$|u_{\Tilde{y}}(0,\Tilde{s})|$ by $M$, the estimate \eqref{gen} and H\"older inequality with $1/p+1/p'=1$ for the second inequality, the \eqref{kostasgaus} for the third and H\"older for the fourth inequality for $\beta>(3-p)/(2p)$. In the last equalities we changed the order of integration, and in the last inequality we used that $\|\cdot\|_{L^p(\mathcal{D})}\leq \lambda^{1/p} \|\cdot\|_{L^\infty(\mathcal D)}$. So we get
\begin{equation}\label{2term}
\mathcal{B}_2 \leq c \int_{b-\epsilon}^{t}|t-\tilde{s}|^{-\beta p} E\left(\int_{b-\epsilon}^{\tilde{s}} \sup\limits_{\tilde{y}\in\mathcal{D}} \int_{\mathcal{D}}
|D_{y,s}u_{n}(\Tilde{y},\Tilde{s})|^p\,dy\,ds\right)d\Tilde{s}.
\end{equation}

Regarding the term $\mathcal{B}_3$ in \eqref{totre}, we have
\begin{eqnarray}
\mathcal B_3 &:=& c_p E\left(\int_{b-\epsilon}^t \sup_{x\in\mathcal{D}}\int_{\mathcal{D}}|A|^p\,dy\,ds\right)
\nonumber\\
&=&
c_p E\biggl{(}
\int_{b-\epsilon}^t\displaystyle{\sup_{x\in\mathcal{D}}}\int_{\mathcal{D}}\Big{|}\int_s^t\int_{\cd}G_{\Tilde{y}}(x,\Tilde{y},t-\Tilde{s})
D_{y,s}(u_{\Tilde{y}}(0,\Tilde{s}))\Tilde{y}T_n(\Tilde{y}^{-1}u_{n}(\Tilde{y},\Tilde{s}))\,d\Tilde{y}\,d\Tilde{s}\Big{|}^p\,dy\,ds \biggr{)}
\nonumber\\
&\leq& c \, E\biggl{(}\int_{b-\epsilon}^{t}\displaystyle{\sup_{x\in\mathcal{D}}}\int_{\mathcal{D}} \Big{|}\int_{s}^{t}\int_{\cd}|G_{\Tilde{y}}(x,\Tilde{y},t-\Tilde{s})|
\,d\Tilde{y}\,d\Tilde{s}\Big{|}^pdy\,ds\biggr{)}
\nonumber \\
&\leq& \,c\, \int_{b-\epsilon}^t|t-s|^{p/2}ds
\nonumber \\
&= & c\, (t-b+\epsilon)^{\frac{p}{2}+1} , \label{3term1}
\end{eqnarray}
where we used that $u_n=u$ in $\Omega_M^n$ and so it holds that
$$\displaystyle{\sup_{\Tilde{y}\in\mathcal{D}}}|\Tilde{y}^{-1}u_n(\Tilde{y},\Tilde{s})|<
n , $$
therefore
$$
T_n(\Tilde{y}^{-1}u_n(\Tilde{y},\Tilde{s}))=\Tilde{y}^{-1}u_n(\Tilde{y},\Tilde{s})<n,$$
which gives that
$$\Tilde{y}\displaystyle{\sup_{\Tilde{y}\in\mathcal{D}}}|T_n(\Tilde{y}^{-1}u_{n}(\Tilde{y},\Tilde{s}))|<\lambda n,$$
while we also used the boundedness of
$D_{y,s}(u_{\Tilde{y}}(0,\Tilde{s}))$ by $M_d$, and the estimate
\eqref{gy}.

Using in \eqref{totre} the estimates \eqref{1term},
\eqref{2term}, \eqref{3term1}, we get, for $p<3 $
and $c=c(n)$ independent of $t, $ that

\begin{flalign}
 E\left(\int_{b-\epsilon}^{t} \sup\limits_{x\in\mathcal{D}} \int_{\mathcal{D}}|D_{y,s}u_{n}(x,t)|^{p}dy ds\right)
& \leq \, c(n)(t-b+\epsilon)^{\frac{3-p}{2}} + c(n) (t-b+\epsilon)^{\frac{p}{2}+1} 
\nonumber\\[0.3em]
 + &c(n)\int_{b-\epsilon}^{t}|t-\tilde{s}|^{-\beta p}E\left(\int_{b-\epsilon}^{\tilde{s}} \sup\limits_{\tilde{y}\in\mathcal{D}} \int_{\mathcal{D}}
|D_{y,s}u_{n}(\Tilde{y},\Tilde{s})|^p\ dy ds\right)d\Tilde{s}\;.
\label{totre2n}
\end{flalign}
Setting 
$$L_n(t):= E\left(\int_{b-\epsilon}^t \sup\limits_{x\in\mathcal{D}} \int_{\mathcal{D}}|D_{y,s}u_{n}(x,t)|^{p}dy ds\right)\;,$$
the inequality \eqref{totre2n} becomes 

\begin{equation}\label{totre3f} L_n(t) \leq c (t-b+\epsilon)^{\frac{3-p}{2}} + c (t-b+\epsilon)^{\frac{p}{2}+1} + c\int_{b-\epsilon}^t|t-\tilde{s}|^{-\beta p} \,L_n(\tilde{s})\,d\tilde{s}.\end{equation}
By \eqref{totre3f}, we get 

\begin{eqnarray}
\int_{b-\epsilon}^t|t-\tilde{s}|^{-\beta p}\, L_n(\tilde{s})\,d\tilde{s}
&\leq&
c \int_{b-\epsilon}^t |t-\Tilde{s}|^{-\beta p}\, (\tilde{s}-b+\epsilon)^{\frac{3-p}{2}}\,d\tilde{s} 
+ c \int_{b-\epsilon}^t |t-\Tilde{s}|^{-\beta p} \, (\tilde{s}-b+\epsilon)^{\frac{p}{2}+1}\,d\tilde{s} 
\nonumber\\
& & + c \int_{b-\epsilon}^t |t-\Tilde{s}|^{-\beta p} \int_{b-\epsilon}^{\tilde{s}} |\tilde{s}-\tau|^{-\beta p} \,  L_n(\tau) \, d\tau \, d\tilde{s}
\nonumber\\
&\leq&
c (t-b+\epsilon)^{\frac{3-p}{2}}\, \int_{b-\epsilon}^t |t-\Tilde{s}|^{-\beta p}\, d\tilde{s} 
+ c  (t-b+\epsilon)^{\frac{p}{2}+1}\,\int_{b-\epsilon}^t |t-\Tilde{s}|^{-\beta p} \,d\tilde{s} 
\nonumber\\
& & + c \int_{b-\epsilon}^t |t-\Tilde{s}|^{-\beta p} \int_{b-\epsilon}^{\tilde{s}}  |\tilde{s}-\tau|^{-\beta p} \, L_n(\tau) \, d\tau \, d\tilde{s}
\nonumber\\
&=&
\frac{c}{1-\beta p} \, (t-b+\epsilon)^{\frac{3-p}{2}+1-\beta p}
\,+\,
\frac{c}{1-\beta p} \, (t-b+\epsilon)^{\frac{p}{2}+1+1-\beta p}
\nonumber\\
& & + c \int_{b-\epsilon}^t \Big[ \int_{\tau}^t |t-\Tilde{s}|^{-\beta p} \, |\tilde{s}-\tau|^{-\beta p} \, d\tilde{s} \Big]\, L_n(\tau) \, d\tau
\nonumber\\
&=&
c\, (t-b+\epsilon)^{\frac{5-p}{2}-\beta p}
\,+\,
c \, (t-b+\epsilon)^{\frac{p+4}{2}-\beta p}
\, +\, c \int_{b-\epsilon}^t L_n(\tau) \, d\tau \,,\label{gineqp2}
\end{eqnarray}
where we used that (see \eqref{hineq})
$$
\int_\tau^t|t-\Tilde{s}|^{-\beta p} \, |\tilde{s}-\tau|^{-\beta p} \,  \,d\tilde{s} \leq C < +\infty.
$$
Using \eqref{gineqp2} in the last term of \eqref{totre3f}, we obtain

\begin{equation}
L_n(t) \leq c (t-b+\epsilon)^{\frac{3-p}{2}} + c (t-b+\epsilon)^{\frac{p}{2}+1}\,+ c\, (t-b+\epsilon)^{\frac{5-p}{2}-\beta p}
\,+\,c \, (t-b+\epsilon)^{\frac{p+4}{2}-\beta p}
\, +\, c \int_{b-\epsilon}^t L_n(\tau) \, d\tau\;.
\end{equation}
By Gr\"onwall's lemma, we conclude that, for $t\leq b < \min\{T,\tau_M,\tau_{M_{\rm d}}\}, $

$$L_n(t) \leq c (t-b+\epsilon)^{\frac{3-p}{2}} + c (t-b+\epsilon)^{\frac{p}{2}+1}\,+ c\, (t-b+\epsilon)^{\frac{5-p}{2}-\beta p}
\,+\,
c \, (t-b+\epsilon)^{\frac{p+4}{2}-\beta p} \,,$$
that is

\begin{eqnarray}\label{keq1}
E\left(\int_{b-\epsilon}^t \sup\limits_{x\in\mathcal{D}} \int_{\mathcal{D}}|D_{y,s}u_{n}(x,t)|^p\,dy\, ds\right)
&\leq& c \,(t-b+\epsilon)^{\frac{3-p}{2}} \,+\, c\, (t-b+\epsilon)^{\frac{p}{2}+1}
\nonumber\\
& & + \,c \,(t-b+\epsilon)^{\frac{5-p}{2}-\beta p}\,+\,
c \, (t-b+\epsilon)^{\frac{p+4}{2}-\beta p}.
\end{eqnarray}
Using that $ D_{y,s}\bigl{(}u_n(x,t)\bigr{)}=0 $ for $s>t$ and any $x\in\cd$, and \eqref{keq1}, we obtain, for $ b-\epsilon\leq t\leq b,$

\begin{eqnarray}\label{keq2}
E\left(\int_{b-\epsilon}^b \sup\limits_{x\in\mathcal{D}} \int_{\mathcal{D}}|D_{y,s}u_{n}(x,t)|^p\,dy\,ds\right)
&=&
E\left(\int_{b-\epsilon}^t \sup\limits_{x\in\mathcal{D}} \int_{\mathcal{D}}|D_{y,s}u_{n}(x,t)|^p\,dy\,ds\right)
\nonumber\\
&\leq&
 c \,(t-b+\epsilon)^{\frac{3-p}{2}} \,+\, c\, (t-b+\epsilon)^{\frac{p}{2}+1}
\nonumber\\
& &+\, c\, (t-b+\epsilon)^{\frac{5-p}{2}-\beta p}
\,+\,c \, (t-b+\epsilon)^{\frac{p+4}{2}-\beta p}
\nonumber\\
&\leq&
 c \,\epsilon^{\frac{3-p}{2}} \,+\, c\, \epsilon^{\frac{p}{2}+1}
\,+\, c\, \epsilon^{\frac{5-p}{2}-\beta p}
\,+\,c \, \epsilon^{\frac{p+4}{2}-\beta p}\nonumber\\
&\leq&
 c \,\epsilon^{\frac{3-p}{2}} \,,\label{keq3}
\end{eqnarray}
for $ \beta \in ((3-p)/(2p),1/(2p)). $ Taking the supremum in \eqref{keq3} over $t\in[b-\epsilon,b], $ we get \eqref{51}. 

Furthermore, using \eqref{41}, \eqref{2term}, \eqref{3term1}  with $b=t$ and \eqref{51}, we obtain
\begin{eqnarray*}\label{keq4}
& &E\biggl{(}\int_{t-\epsilon}^t \sup_{x\in\mathcal{D}}
\Big{[}\int_{\mathcal{D}}|D_{y,s}u_n(x,t)-G(x,y,t-s)\sigma(y)|^p\,dy\Big{]}
ds\biggr{)}
\\[0.5em]
& &\qquad\leq  2^{p-1}E\left(\int_{t-\epsilon}^t \sup\limits_{x\in\mathcal{D}} \int_{\mathcal{D}}
\bigg{|}\int_s^t\int_{\cd}G_{\Tilde{y}}(x,\Tilde{y},t-\Tilde{s})\
[u_{\Tilde{y}}(0,\Tilde{s})\cg_{n}(\Tilde{y},\Tilde{s})D_{y,s}u_n(\Tilde{y},\Tilde{s})]\,d\Tilde{y}\,d\Tilde{s}\bigg{|}^p dy\,ds\right)
\\
& &\qquad\qquad+\;2^{p-1} E\left(\int_{t-\epsilon}^t \sup_{x\in\mathcal{D}}\int_{\mathcal{D}}|A|^p\,dy\,ds\right)
\\[0.5em]
& &\qquad \leq 
c\,\epsilon^{1+\frac{3-p}{2}-\beta p} \,+\, c\,\epsilon^{\frac{p}{2}+1}
\\[0.5em]
& &\qquad\leq  c\,\epsilon^{1+\frac{3-p}{2}-\beta p} ,
\end{eqnarray*}
which proves the estimate \eqref{52} with $\delta:=\beta p-(3-p)/2.$
\end{proof}

\begin{proposition}Consider a given deterministic $T>0,  $ and $ x\in \mathcal D $ with $ \sigma(x) \neq 0. $ Then, for any $t\in[0,\min\{T,\tau_M,\tau_{M_{\rm d}}\})$, we have that
\begin{equation}\label{dc1}
P\big(\om\in\Om_M^n:\;\|D_{\cdot,\cdot}(u_n(\om;x,t))\|_{L^2(\cd\times[0,\min\{T,\tau_M,\tau_{M_{\rm d}}\}))}>0\big)=1,
\end{equation}
where $u_n$ is the unique solution of \eqref{coint}.
\end{proposition}
\begin{proof}
We keep $t$ less than $T$ deterministic and the stopping times
used in the previous Proposition. Recall \eqref{41} i.e.,
\begin{equation*}
\begin{split}
D_{y,s}u_n(x,t)=\, & G(x,y,t-s)\sigma(y)+\int_s^t\int_{\cd}G_{\Tilde{y}}(x,\Tilde{y},t-\Tilde{s})u_{\Tilde{y}}(0,\Tilde{s})
\ \cg_{n}(\Tilde{y},\Tilde{s})D_{y,s}u_{n}(\Tilde{y},\Tilde{s})\,d\Tilde{y}\,d\Tilde{s}\\
&+\int_s^t\int_{\cd}G_{\Tilde{y}}(x,\Tilde{y},t-\Tilde{s})
D_{y,s}(u_{\Tilde{y}}(0,\Tilde{s}))\Tilde{y}T_n(\Tilde{y}^{-1}u_n(\Tilde{y},\Tilde{s}))\,d\Tilde{y}\,d\Tilde{s}.
\end{split}
\end{equation*}
In the above, taking absolute value and then raising to the second power, we get
\begin{flalign*}
|D_{y,s}u_n(x,t)|^2=\bigg{|}G(x,y,t-s)\sigma(y)&+\int_s^t\int_{\cd}G_{\Tilde{y}}(x,\Tilde{y},t-\Tilde{s})u_{\Tilde{y}}(0,\Tilde{s})
\ \cg_{n}(\Tilde{y},\Tilde{s})D_{y,s}u_n(\Tilde{y},\Tilde{s})\,d\Tilde{y}\,d\Tilde{s}
\nonumber\\
&+\int_s^t\int_{\cd}G_{\Tilde{y}}(x,\Tilde{y},t-\Tilde{s})
D_{y,s}(u_{\Tilde{y}}(0,\Tilde{s}))\Tilde{y} T_n(\Tilde{y}^{-1}u_n(\Tilde{y},\Tilde{s}))\,d\Tilde{y}\,d\Tilde{s}\biggl{|}^2
\nonumber\\
\geq
\frac12A^2-B^2,
\end{flalign*}
where
$$A(x,y,t,s) := G(x,y,t-s)\sigma(y) ,$$ and
\begin{flalign*}
B(x,y,t,s) :=&
\int_s^t\int_{\cd}G_{\Tilde{y}}(x,\Tilde{y},t-\Tilde{s})u_{\Tilde{y}}(0,\Tilde{s})\
\cg_{n}(\Tilde{y},\Tilde{s})D_{y,s}u_{n}(\Tilde{y},\Tilde{s})\,d\Tilde{y}\,d\Tilde{s}\\
&+\int_s^t\int_{\cd}G_{\Tilde{y}}(x,\Tilde{y},t-\Tilde{s})
D_{y,s}(u_{\Tilde{y}}(0,\Tilde{s}))\Tilde{y}T_n(\Tilde{y}^{-1}u_n(\Tilde{y},\Tilde{s}))\,d\Tilde{y}\,d\Tilde{s}.
\end{flalign*}
Then, we integrate to obtain

\begin{equation}\label{ko1}
\int_{t-\epsilon}^t\int_{\mathcal{D}}|D_{y,s}u_n(x,t)|^2\,dy\,ds
\geq
\frac12\int_{t-\epsilon}^t \int_{\mathcal{D}}A^2\, dy\,ds
\,-\,
\int_{t-\epsilon}^t\int_{\mathcal{D}}B^2\,dy\,ds .
\end{equation}
For the first term on the right-hand side of \eqref{ko1}, we have (cf. \cite[Lemma B.2.3]{DS}), for small $\epsilon,$
\begin{eqnarray}\label{kwstas1}
\int_{t-\epsilon}^t \int_{\mathcal{D}}A^2\, dy\,ds
&=&
\int_0^\epsilon\int_{\cd}G^2(x,y,\tau)\,\sigma^2(y)\,dy\,d\tau 
\nonumber
\\
&\geq&
\int_0^\epsilon\int_{x-\sqrt{\epsilon}}^{x+\sqrt{\epsilon}}G^2(x,y,\tau)\,\sigma^2(y)\,dy\,d\tau
\nonumber
\\
&\geq&
\frac{\sigma^2(x)}{2}\int_0^\epsilon\int_{x-\sqrt{\epsilon}}^{x+\sqrt{\epsilon}}G^2(x,y,\tau)\,dy\,d\tau
\nonumber
\\
& \geq & c_1\, \epsilon^{1/2}.
\end{eqnarray}
For the second term on the right-hand side of \eqref{ko1}, we use \eqref{41} and \eqref{52}, to obtain that
\begin{eqnarray}
E\biggl{(}\int_{t-\epsilon}^t\int_{\cd}B^2\,dy\,ds\biggr{)}&=&
E\biggl{(}\int_{t-\epsilon}^t\int_{\cd}\Big{|}D_{y,s}u_n(x,t)-G(x,y,t-s)\sigma(y)\Big{|}^2\,dy\,ds\biggr{)}
\nonumber\\
&\leq&
c_2\,\epsilon^{1-\delta} ,\label{kwstas4}
\end{eqnarray}
for any $\delta \in (0,1). $ By \eqref{ko1}-\eqref{kwstas4} and Markov inequality, we get, for $ 0<\delta<1/2,$

\begin{eqnarray*}
P\biggl{(}\int_0^{\min\{T,\tau_M,\tau_{M_{\rm d}}\}}\int_{\cd}|D_{y,s}u_n(x,t)|^2dyds>0\biggr{)}
&\geq&
P\biggl{(}\frac12\int_{t-\epsilon}^t\int_{\cd}A^2 dyds-
\int_{t-\epsilon}^t\int_{\cd}B^2 dyds>0\biggr{)}
\\
&\geq&
P\biggl{(}\int_{t-\epsilon}^t\int_{\cd}B^2\, dy\,ds<\frac{c_1}{2}\epsilon^{1/2}\biggr{)}
\\
&\geq&
1-c\,\epsilon^{-1/2}E\biggl{(}\int_{t-\epsilon}^t\int_{\cd}B^2\,dy\,ds\biggr{)}
\\
&\geq&
1\,-\,c\,\epsilon^{-1/2}\,\epsilon^{1-\delta} \rightarrow 1, \qquad \text{as}\ \epsilon\rightarrow 0 ,
\end{eqnarray*}
which yields \eqref{dc1}.
\end{proof}

\begin{theorem}\label{thmkostas}Let $u$ be the solution of \eqref{int}. Then, for any $(x,t)\in \cd \times (0,\min\{T,\tau_M,\tau_{M_{\rm d}}\}) $ with $\sigma(x)\neq 0, $ we have that
\begin{equation*}
P\big(\{\om\in\Om_M:\; \left\|D_{\cdot,\cdot}(u(\om;x,t))\right\|_{L^2(\cd\times[0,\min\{T,\tau_M,\tau_{M_{\rm d}}\}))}>0\}\big)=1,
\end{equation*}
and the law of $ u(x,t) $ is absolutely continuous with respect to the Lebesgue measure on $\mathbb R.$
\end{theorem}

\begin{proof}Using that $ \Om_M^1\subseteq\Om_M^2\subseteq\dots\subseteq\Om_M
$ so that $ \lim_{n\rightarrow+\infty} P(\Om_M^n) = P(\Om_M), $
the fact that  $D_{y,s}u(x,t,\omega)=D_{y,s}u_n(x,t,\omega)$ on $\Omega_M^n$ a.s., and \eqref{dc1}, we obtain that
\begin{flalign*}
P\big(\{\om\in\Om_M:\; \left\|D_{\cdot,\cdot}(u(\om;x,t))\right\|&_{L^2(\cd\times[0,\min\{T,\tau_M,\tau_{M_{\rm d}}\}))}>0\}\big)=
\\
= &\lim_{n\to\infty}P\big(\{\om\in\Om_M^n:\; \left\|D_{\cdot,\cdot}(u(\om;x,t))\right\|_{L^2(\cd\times[0,\min\{T,\tau_M,\tau_{M_{\rm d}}\}))}>0\}\big)
\\
= &\lim_{n\to\infty}P\big(\{\om\in\Om_M^n:\; \left\|D_{\cdot,\cdot}(u_n(\om;x,t))\right\|_{L^2(\cd\times[0,\min\{T,\tau_M,\tau_{M_{\rm d}}\}))}>0\}\big)
\\
= &1,
\end{flalign*}
therefore,
 \begin{equation}\label{kostas11}
P\Big(\int_0^{\min\{T,\tau_M,\tau_{M_{\rm d}}\}}\int_{\cd}|D_{y,s}u(x,t)|^2 \,dy\,ds>0\Big) =1.
\end{equation}
We also have $u(x,t)\in D^{1,2}_{\text{loc}}\subset D^{1,1}_{\text{loc}}. $ Therefore, \eqref{kostas11} and \cite[Theorem 2.1.3]{nual} imply that the law of $ u(x,t) $ is absolutely continuous with respect to the Lebesgue measure on $\mathbb R.$
\end{proof}

\vspace{0.5cm}

\begin{center}
{\large \bfseries Declarations:}
\end{center}

\paragraph*{\bfseries Acknowledgement} The research work is implemented in the framework of H.F.R.I call “Basic research Financing (Horizontal support of all Sciences)” under the National Recovery and Resilience Plan “Greece 2.0” funded by the European Union – NextGenerationEU. (H.F.R.I. Project Number: 14910).

\vspace{0.5cm}

\paragraph*{\bfseries Conflicts of interests/Competing interests} The authors have no conflicts of interest to declare that are relevant to the content of this article.

\vspace{0.5cm}

\paragraph*{\bfseries Data availability} Data sharing is not applicable to this article as no datasets were generated or analysed during the current study.

\bibliographystyle{plain}

\begin{thebibliography}{99}
\bibitem{ABiK} D.C. Antonopoulou, M. Bitsaki, G.D. Karali, The multi-dimensional
stochastic Stefan financial model for a portfolio of assets, Discrete Contin. Dyn. Syst. B, 4, pp.1955--1987, 2022.


\bibitem{JDE18} D.C. Antonopoulou, D. Farazakis, G.D. Karali, Malliavin Calculus
for the stochastic Cahn- Hilliard/Allen-Cahn equation with unbounded noise diffusion, Journal of Differential Equations, 265(7), pp. 3168--3211, 2018.

\bibitem{AFK2} D.C. Antonopoulou, D. Farazakis, G. Karali, Existence of maximal solutions for the financial stochastic Stefan problem of a volatile asset with spread, Stochastic Processes and their Applications, Vol.\ 179, https://doi.org/10.1016/j.spa.2024.104506

\bibitem{web} C. Cardon-Weber, Cahn-Hilliard stochastic equation: existence of the solution and of its density, Bernoulli(5), pp. 777--816, 2001.

\bibitem{DQS} R.\ C.\ Dalang, L.\ Quer-Sardanyons, Stochastic integrals for spde's: a comparison, Expositiones Mathematicae 29, 67-109, 2011.

\bibitem{DS} R.\ C.\ Dalang, M.\ Sanz-Sole, Stochastic Partial Differential Equations, Space-time White Noise and Random Fields, arXiv:2402.02119.

\bibitem{EidelIvas} S. D. Eidelman, S. D. Ivasisen, Investigation of the Green's matrix of a homogeneous parabolic boundary value problem, Trudy Moskov. Mat. Ob\v s\v c. 23 (1970), 179--234 (Russian).

\bibitem{Evans} L. C. Evans, Partial Differential Equations: Second Edition, Grad. Stud. in Math., (19), 749 pp., 2010.

\bibitem{BH} B. Hambly, J. Kalsi, Stefan problems for reflected SPDEs driven by space-time white noise, Stoch. Proc. Applic., 130, pp.924--961,
2020.

\bibitem{BH2} B. Hambly, J. Kalsi, A reflected moving boundary problem driven by space-time white noise, Stoch. Partial Differ. Equ. Anal. Comput., 7, pp.746--807, 2019.

\bibitem{nual} D. Nualart, The Malliavin Calculus and Related Topics, Springer, USA, 2005.

\bibitem{Solonnikov} V. A. Solonnikov, The Green's matrices for parabolic boundary value problems, Zap. Nau\v cn. Sem. Leningrad. Otdel. Mat. Inst. Steklov. (LOMI) 14 (1969), 256--287 (Russian).

\bibitem{Walsh} J.B. Walsh, An introduction to stochastic partial differential
equations, Lecture Notes in Mathematics, Springer-Verlag, 1986.
\end{thebibliography}

\end{document}